\newtheorem{theorem}{Theorem}[section]
\newtheorem{proposition}[theorem]{Proposition}
\newtheorem{remark}[theorem]{Remark}
\newtheorem{lemma}[theorem]{Lemma}
\newtheorem{definition}[theorem]{Definition}
\numberwithin{equation}{section}
\newcommand{\R}{\mathbb R}
\newcommand{\be}{\begin{equation}}
\newcommand{\ee}{\end{equation}}
\newcommand{\ba}{\begin{eqnarray}}
\newcommand{\ea}{\end{eqnarray}}
\newcommand{\beq}{\begin{equation}}
\newcommand{\eeq}{\end{equation}}
\definecolor{red}{rgb}{0,0,0}
\numberwithin{equation}{section}
\keywords{free boundary value problems; water-waves equations; geometric inverse problems; stability; size estimate.}
\subjclass[2010]{35R30; 76B15; 35Q35; 35R35; 76D27.}
\begin{document}

\title[Inverse problem in water-waves]{The stability for an inverse problem of bottom recovering  in water-waves}

\author{R. Lecaros}
\address{R. Lecaros, Universidad T\'ecnica Federico Santa Mar\'ia, Departamento de Matem\'atica, Casilla 110-V, Valpara\'iso, Chile}
\email{rodrigo.lecaros@usm.cl}

\author{J. L\'opez-R\'ios}
\address{J. L\'opez-R\'ios, Escuela de Ciencias Matem\'aticas y Computacionales, YACHAY TECH, San Miguel de Urcuqu\'i, Hacienda San Jos\'e S/N, Ecuador}
\email{jlopez@yachaytech.edu.ec}

\author{J.H. Ortega}
\address{J.H. Ortega, Departamento de Ingenier\'ia Matem\'atica and Centro de Modelamiento Matem\'atico, Universidad de Chile and UMI-CNRS 2807, Beauchef 851, Ed. Norte, 5th floor, Santiago, Chile}
\email{jortega@dim.uchile.cl}

\author{S. Zamorano}
\address{S. Zamorano, Universidad de Santiago de Chile, Departamento de
Matem\'atica y Ciencia de la Computaci\'on, Facultad de Ciencia, Casilla 307-Correo 2,
Santiago, Chile.}
 \email{sebastian.zamorano@usach.cl}

\thanks{This work was partially supported by the project MATHAMSUD MADESEIS 19-MATH-01 and CMM-Basal Grant PIA AFB-170001. R. Lecaros was partially supported by FONDECYT Grant 11180874, J. H. Ortega was partially supported by Fondecyt Grant 1201125, S. Zamorano was partially supported Conicyt PAI Convocatoria Nacional Subvención a la Instalación en la Academia Convocatoria 2019 PAI 77190106.}

\begin{abstract}
In this article we deal with a class of geometric inverse  problem for bottom detection by one single measurement on the free surface in water--waves. 
We found upper and lower bounds for the size of the region enclosed between two different bottoms, in terms of Neumann and/or Dirichlet data on the free surface.
	Starting from the general water--waves system in bounded domains with side walls, we manage to formulate the problem in terms of the Dirichlet to Neumann operator and thus, as an elliptic problem in a bounded domain with Neumann homogeneous condition on the rigid boundary. Then we study the properties of the Dirichlet to Neumann map and analyze the called method of size estimation.
\end{abstract}

\maketitle

\section{Introduction}

\subsection{Motivation}
The study of waves in the ocean is a wide open area of interest, not only for its practical applications, but also for the theoretical development it represents in oceanography and the mathematical study of Partial Differential Equations (PDE's). Among other applications of this knowledge, we can find the modeling of tsunamis, the influence of the ocean floor and plate displacements on the occurrence of certain waves, and the modeling of wave--breaking phenomena near the coast.

While there is no a unified approach to deal with this problem, from the mathematical point of view there are some general considerations that are widely accepted (\cite{lannes2013water}). Let us consider that we have an ideal, inviscid, incompressible fluid. Then, the general water--waves problem is the description of the motion of a layer of fluid, delimited below by a solid bottom, and above by a free surface; influenced by the force of gravity. It is modeled by means of conservation laws, together with suitable boundary conditions.
	
More specifically, if we consider that the bottom and the wave surface are parameterized, respectively, as $b(x)$, $\zeta(t,x)$, and we define $\Omega_t=\{(x,y)\in \R^N\times\R :b(x)<y<\zeta(t,x)\}$, then the general water--waves system is given by (see \cite{lannes2013water})
\begin{equation}
\label{1}
\left\{
\begin{array}{rll}
\Delta_{x,y}\phi=0, && \Omega_t, \\
\partial_t\zeta+\nabla_x\zeta\cdot\nabla_x\phi =\partial_y\phi, && y=\zeta, \\
\partial_t\phi+\frac{1}{2}\left(|\nabla_x\phi|^2+(\partial_y\phi)^2\right)+g\zeta  =0, && y=\zeta, \\
\partial_n\phi  =0, && y=b,
\end{array}
\right.
\end{equation}
where $g$ is the gravity constant, and the velocity of the fluid, ${\bf u}$, is such that ${\bf u}=\nabla_{x,y}\phi$, with $\phi$ being the velocity potential. We assume $b(x)<\zeta(t,x)$ for any $t>0$ and $x\in\R^N$, see Figure 1.

\begin{figure}
	\centering
	\includegraphics[scale=1.0]{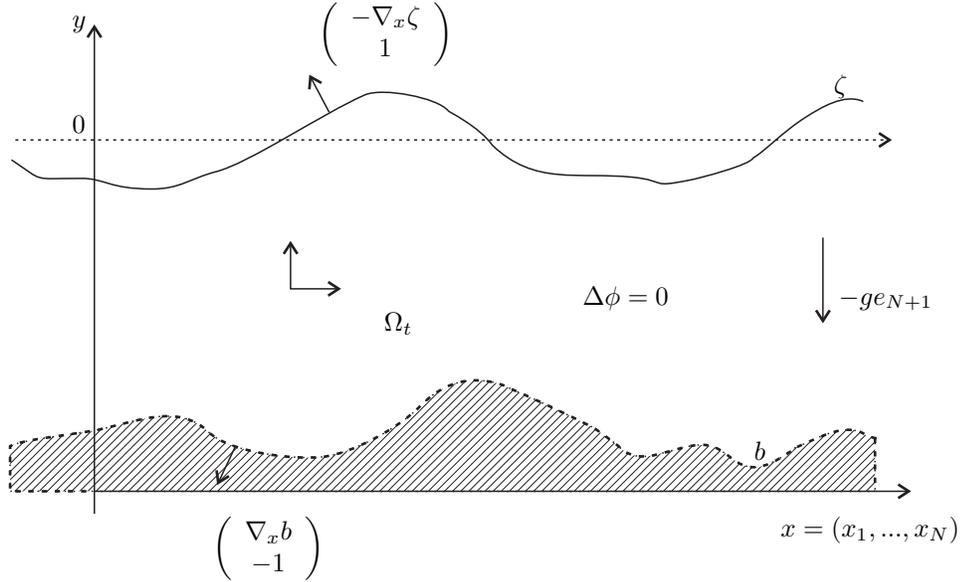}
	\put(-318,145){$0$}
	\put(-30,160){$\zeta$}
	\put(-60,21){$b$}
	\put(-318,185){$y$}
	\put(-50,-8){$x=(x_1,...,x_N)$}
	\put(-225,180){$\left( \begin{array}{c} -\nabla_x\zeta \\ 1 \end{array}\right)$}
	\put(-265,-15){$\left( \begin{array}{c} \nabla_xb \\ -1  \end{array}\right)$}
	\put(-28,80){$-ge_{N+1}$}
	\put(-200,70){$\Omega_t$}
	\put(-125,80){$\Delta\phi=0$}
		\label{Fig0}
		\caption{Scheme of the water--waves model}
\end{figure}

In this work the inverse problem of bottom size estimation from measurements on the free surface (the waves) is addressed. Namely, let $S\subset\R^N$ be an open, bounded set. Assume $x\in S$ and there are two bottoms $b_0,b_1$ such that $b_0(x)\le b_1(x)$ for all $x\in S$. Let $(\phi_0,\zeta_0),(\phi_1,\zeta_1)$ be the two solutions of (\ref{1}) associated to $b_0$ and $b_1$, respectively. If the free--surface for both problems coincide in some $t_0>0$, that is $\zeta_0(t_0,\cdot)=\zeta_1(t_0,\cdot)$, we want to find two positive constants $C_1,C_2$ such that the volume of region  $D=\{(x,y)\in S\times\R:b_0(x)\le y\le b_1(x)\}$ can be estimated, from above and below, by measurements of $\phi_0(t_0,\cdot),\phi_1(t_0,\cdot)$ on the common free--surface. Specifically, let us consider the following numbers
\begin{equation*}
W_{i,j}=\int_{y=\zeta(t_0,\cdot)}\phi_i(t_0,\cdot)\partial_n\phi_j(t_0,\cdot), \quad i,j=0,1.
\end{equation*}

Our main results consist in estimating the volume of $D$ in terms of these numbers, that is,
\begin{equation}\label{IP2}
C_1\eta_1(W_{1,1}-W_{1,0}, W_{1,1}-W_{0,1})\leq |D|\leq C_2\eta_2(W_{1,1}-W_{1,0}, W_{1,1}-W_{0,1}),
\end{equation}
for some suitable functions $\eta_1,\eta_2$ such that $\eta_i(0,0)=0$.



Notice that measurements are made in a single time $t_0$. This can be explained by the relation between the velocity potential inside the domain and its trace on the free surface as we do next. We rewrite system (\ref{1}) in a different way and the first step is to consider the two nonlinear boundary conditions on the free surface, $y=\zeta$, as an independent evolutionary system, which is related to the inner domain, $\Omega_t$, through an elliptic problem. This is possible by considering the called Dirichlet to Neumann operator, see \cite{craig1993numerical,zakharov1968stability}. That is, if we assume that $\psi(t,x)=\phi(t,x,\zeta(t,x))$ is known, then the elliptic problem 
\begin{equation}
\label{4}
\left\{
\begin{array}{rll}
\Delta_{x,y}\phi=0, &&\Omega_t, \\
\phi=\psi,  &&y=\zeta, \\
\partial_n\phi=0, && y=b,
\end{array}
\right.
\end{equation}
has a unique solution. Therefore, the following Dirichlet to Neumann operator, $G$, is well defined on $(0,\infty)\times\R^N$. Let
\begin{equation}
\label{3}
G(\zeta,b)\psi:=-\sqrt{1+|\nabla_x\zeta|^2}\partial_n\phi|_{y=\zeta}.
\end{equation}

Moreover, by the chain rule
\begin{equation*}
\nabla_x\psi=\nabla_x\phi+\phi_y\nabla_x\zeta,
\end{equation*}
which implies
\begin{equation*}
\nabla_x\psi\cdot\nabla_x\zeta+G=\nabla_x\phi\cdot\nabla_x\zeta+\phi_y|\nabla_x\zeta|^2+G=\phi_y(1+|\nabla_x\zeta|^2).
\end{equation*}

This last equation allows to relating the vertical velocity at the free boundary, and the operator $G$ by
\begin{equation}
\label{3_1}
\phi_y=\frac{G+\nabla_x\psi\cdot\nabla_x\zeta}{1+|\nabla_x\zeta|^2}.
\end{equation}

Finally, from (\ref{3}) and (\ref{3_1}), the two boundary conditions on the free surface, in system (\ref{1}), are written as
\begin{equation}
\label{2}
\left\{
\begin{array}{r}
\partial_t\zeta-G(\zeta,b)\psi=0, \\
\displaystyle \partial_t\psi+g\zeta+\frac{1}{2}|\nabla_x\psi|^2-\frac{(G(\zeta,b)\psi+\nabla_x\zeta\cdot\nabla_x\psi)^2}{2(1+|\nabla_x\zeta|^2)}=0,
\end{array}
\right.
\end{equation}
where $(t,x)\in(0,\infty)\times\R^N$.

The above system is complemented with initial conditions $\zeta(0,x)=\zeta_0(x)$, $\psi(0,x)=\psi_0(x)$. It is worth noting that problem (\ref{2}) is an evolutionary system, involving a differential, nonlinear, nonlocal operator. Moreover, the information of the bottom is implicit as a parameter through the Dirichlet to Neumann operator $G$.

In literature (see \cite{lannes2013water}), (\ref{2}) it is commonly known as the general water--waves system, in replacement of (\ref{1}). This is so because, once (\ref{2}) is solved, it is possible to find $\phi$ by means of system (\ref{4}). The fact that we only need a single measurement on the free--surface at time $t_0$ is clear now. That is, whether we know $\partial_t\zeta|_{t=t_0}$ or $\zeta$ on an open interval of time, by the first equation in (\ref{2}), the Neumann condition on the free boundary is known, which allows us to establish the inverse problem of recovering $b$, as a boundary detection by measures of $\psi,\partial_n\phi$ on $y=\zeta$, for the system (\ref{4}).

The assumption of the existence of $t_0>0$ and $S\subset\R^N$ to obtain the identifiability of the inverse problem of bottom detection, is familiar in the context of water--waves. For instance, in \cite{vasan2013inverse}, the authors addressed the numerical problem of recovering the bottom from the water wave height and its first two time derivatives
at one time instant, assuming that the velocity potential is periodic and the profile height is small. They also consider the cases of measuring the surface profile over an interval $[0,T]$ or the surface profile at a discrete set of points. In all cases they
attempt to eliminate the necessity of measuring the velocity potential $\psi$, which, as they explain, is physically impractical. Another example is given in \cite{kenig2020uniqueness}, where the authors proved the unique continuation property for the Benjamin--Ono equation for a nonlocal operator, under the assumption that the solution is zero on an open subset of $[0,T]\times\R$.

Concerning the inverse problem of bottom detection through measures on the free surface, in \cite{fontelos2017bottom}, the authors used the simple elliptic formulation (\ref{3})--(\ref{4}) of the water--waves system and a classical strategy in geometric inverse problems to prove the identifiability of the bottom by measuring, simultaneously, the profile and its time derivative at the free surface in a single time, and an open subset of $\R^N$. That is, for a fixed $t_0>0$ and $x\in S\subset\R^N$ an open set, they proved the injectivity of the operator $b\mapsto (\zeta,\psi, \partial_t\zeta)_{t=t_0}$; namely, if $\zeta_1(t_0,x)=\zeta_2(t_0,x)$, $\psi_1(t_0,x)=\psi_2(t_0,x)$, $\partial_t\zeta_1(t_0,x)=\partial_t\zeta_2(t_0,x)$, then one has $b_1(x)=b_2(x)$ in $\R^N$.


Existence and uniqueness of solutions for system (\ref{2}), within a Sobolev class, have been widely studied. We refer, for instance, to the literature review by Lannes \cite{lannes2013water} in the N--dimensional case. For the well posedness of system (\ref{2}) in one dimension, on bounded domains, see Alazard {\it et. al.} \cite{alazard2016cauchy}. Well posedness of the N--dimensional case on bounded domains is still an open problem, among other things, due to the presence of solid walls and the underlying physics in the interaction between the wall and the free surface \cite{benjamin1979gravity,graham1983new,kim2015capillary}.


\subsection{Size estimate for the elliptic formulation}

As we mentioned before, we are interested in the inverse problem of estimating the volume of the region $D$ from the single measurement of $\Lambda_{t_0}(b):= (\zeta,\phi|_{\zeta},\partial_n\phi|_{\zeta})|_{t=t_0}$. The study of this inverse geometric problem is motivated, firstly, by the well--know result that stability of the elliptic problem (\ref{4}) is a severe ill--posed problem. That is, given $f,h,r$, the continuity of the solution of
\begin{equation}\label{Cauchy}
\left\{
\begin{array}{rll}
\Delta u=f,  &&\Omega, \\
u=h,  && \partial\Omega, \\
\partial_n u=r,  &&\partial\Omega,
\end{array}
\right.
\end{equation}
in terms of the data $(f,h,r)$ is not, in general, of a Lipschitz type. That is, the Cauchy problem (\ref{Cauchy}) is ill--posed in the Hadamard sense. Besides, as Hadamard pointed out in \cite{hadamard1923lectures}, confirmed later in \cite{beretta2017size,bourgeois2010stability, choulli2016applications, choulli2019new, choulli2019global}, the modulus of continuity of the mapping $(f,h,r)\mapsto u$ is of a logarithmic type and is the best possible one to be expected.

%

Secondly, the three variables  $(\zeta,\phi|_{\zeta},\partial_n\phi|_{\zeta})$ are not independent at all. The complete stability of the functional $\Lambda_{t_0}(b)$ is a difficult problem to deal with; not only because the data are different, but, their free surfaces do not necessarily intersect each other. For example, if we consider $b_0,b_1:S\subset \R^N\to \R$ two different  bottoms, we have different measures $\Lambda_{t_0}(b_0)\neq \Lambda_{t_0}(b_1)$, that is 
\begin{equation*}
(\zeta_0,\phi_0|_{\zeta},\partial_n\phi_0|_{\zeta})|_{t_0}\neq (\zeta_1,\phi_1|_{\zeta},\partial_n\phi_1|_{\zeta})|_{t_0},
\end{equation*}
where these functions satisfy the corresponding systems:
\begin{equation}
\label{6}
\left\{
\begin{array}{rll}
\Delta\phi_i=0,  &&\Omega_{t_0}^i, \\
\phi_i=\psi_i, &&y=\zeta_i, \\
\partial_n\phi_i=0, &&y=b_i,
\end{array}
\right.
\end{equation}
for $i=0,1$, at $t=t_0$.

Therefore, in relation with the bottom, it seems reasonable the study of a different quantity. Namely, the volume enclosed by two bottoms. Following the approach introduced by Alessandrini \emph{et. al.} in \cite{alessandrini2002detecting}, we establish a quantitative estimate of the size of the region bounded by the side walls and the bottoms in terms of suitable measurements. 

The following three cases enclose the main idea of this work. We are going to work in bounded domains, so we assume that $S\subset\R^N$ is a given open set. To ease notation and stress the dependence of the operator in terms of the quantities on the upper boundary, we define
\begin{equation*}
\Omega(b,\zeta)=\{(x,y)\in S\times\R :b(x)<y<\zeta(t_0,x)\},
\end{equation*}
\begin{equation*}
\Gamma_{w}(b,\zeta):=\{(x,y)\in S\times\R:b(x)<y<\zeta(t_0,x),\; x\in\partial S\},
\end{equation*}
\begin{equation*}
\Gamma(\zeta)=\{(x,y)\in S\times\R: y=\zeta(t_0,x),\; x\in S\}.
\end{equation*}

\smallskip 

{\bf Case I: single bottom cavity. Dirichlet and/or Neumann measurements on the whole free surface.} $\zeta_0=\zeta_1$ and $b_0\geq b_1$ or $b_0\leq b_1$.

Let $\Omega(b_0,\zeta_0)$ be the domain such that $\partial\Omega(b_0,\zeta_0)=\Gamma(\zeta_0)\cup\Gamma_w(b_0,\zeta_0)\cup\Gamma(b_0)$ and  $\Omega(b_1,\zeta_0)$ the domain with boundary $\partial\Omega(b_1,\zeta_0)=\Gamma(\zeta_0)\cup\Gamma_w(b_1,\zeta_0)\cup\Gamma(b_1)$. Let us consider $b_1\geq b_0$. We consider $\phi_0,\phi$ be the unique weak solutions of the following problems
\begin{equation}\label{caseI}
\!\!\!\!\!\!\!\!\!\!\!\!\! \left\{
\begin{array}{rll}
\Delta\phi_0=0, &&\Omega(b_0,\zeta_0), \\
\phi_0=\psi_0, &&\Gamma(\zeta_0), \\
\partial_n\phi_0=0, &&\Gamma(b_0)\cup\Gamma_w(b_0,\zeta_0),
\end{array}
\right.
 \qquad
\left\{
\begin{array}{rll}
\Delta\phi=0, &&\Omega(b_1,\zeta_0), \\
\phi=\psi, &&\Gamma(\zeta_0), \\
\partial_n\phi=0, &&\Gamma(b_1)\cup\Gamma_w(b_1,\zeta_0).
\end{array}
\right.
\end{equation}

We will study the lower and upper estimate for the volume of $D:=\Omega(b_0,\zeta_0)\setminus \overline{\Omega(b_1,\zeta_0)}$, when the Dirichlet and Neumann measurements $\psi_0,\psi_1$ and $\partial_n\phi_0,\partial_n\phi$, respectively, are performed in the same surface $\Gamma(\zeta_0)$. Specifically, we want to obtain two positive constants $C_1,C_2>0$  such that
\begin{eqnarray*}
C_1 \eta_1\left(\int_{\Gamma(\zeta_0)}\psi(\partial_n\phi-\partial_n\phi_0), \int_{\Gamma(\zeta_0)}\partial_n\phi(\psi-\psi_0)\right)\leq |D|\\\qquad\leq C_2 \eta_2\left(\int_{\Gamma(\zeta_0)}\psi(\partial_n\phi-\partial_n\phi_0), \int_{\Gamma(\zeta_0)}\partial_n\phi(\psi-\psi_0)\right),
\end{eqnarray*}
for some functions $\eta_1,\eta_2$ such that $\eta_i(0,0)=0$.\\

{\bf Case II: multiple cavities. Dirichlet and/or Neumann measurements on the whole free surface.} $\zeta_0=\zeta_1$ and the set $B=\{x\in S:\; b_0(x)=b_1(x)\}$ is finite.

In this case, we consider $\phi_0,\phi$ the solutions of problems (\ref{caseI}) as in Case I. The volume estimate of $D$ will be studied when the Dirichlet and Neumann measurements are made on the same free surface $\Gamma(\zeta_0)$, and we allow bottoms intersections. That is, the subset $D$ is given by  $D:=\Omega(b_0,\zeta_0)\vartriangle \Omega(b_1,\zeta_0)$. Then, in this case, we want to prove the existence of constants $C_3,C_4>0$, such that 
\begin{eqnarray*}
C_3 \eta_3\left(\int_{\Gamma(\zeta_0)}\psi(\partial_n\phi-\partial_n\phi_0), \int_{\Gamma(\zeta_0)}\partial_n\phi(\psi-\psi_0)\right)\leq |\Omega(b_0,\zeta_0)\vartriangle \Omega(b_1,\zeta_0)|\\\qquad\leq C_4 \eta_4\left(\int_{\Gamma(\zeta_0)}\psi(\partial_n\phi-\partial_n\phi_0), \int_{\Gamma(\zeta_0)}\partial_n\phi(\psi-\psi_0)\right),
\end{eqnarray*}
for some functions $\eta_3,\eta_4$ such that $\eta_i(0,0)=0$.

\smallskip\

{\bf Case III: partial measurements on the free surface.} 

In this final case, we analyze the previous two cases when the measurements are performed in a subset $\Gamma^{*}$ of the free surface $\Gamma(\zeta_0)$. That is, we consider $\phi_0,\phi$ being the solutions of:
\begin{equation}\label{caseI.1}
\!\!\!\!\!\!\!\!\!\!\!\!\! \left\{
\begin{array}{rll}
\Delta\phi_0=0, &&\Omega(b_0,\zeta_0), \\
\phi_0=\psi_0, &&\Gamma^{*}, \\
\partial_n\phi_0=0,  &&\Gamma(b_0)\cup\Gamma_w(b_0,\zeta_0),
\end{array}
\right.
\qquad
\left\{
\begin{array}{rll}
\Delta\phi=0, &&\Omega(b_1,\zeta_0), \\
\phi=\psi, &&\Gamma^{*}, \\
\partial_n\phi=0, &&\Gamma(b_1)\cup\Gamma_w(b_1,\zeta_0).
\end{array}
\right.
\end{equation}

We want to establish again an estimate for the size of $D$ in terms of the Dirichlet and/or Neumann data on $\Gamma^{*}$. However, we were able to derive only the upper estimate:
\begin{equation*}
 |D|\leq C_5 \eta_5\left(\int_{\Gamma^{*}}\psi(\partial_n\phi-\partial_n\phi_0), \int_{\Gamma(\zeta_0)}\partial_n\phi(\psi-\psi_0)\right),
\end{equation*}
for some function $\eta_5$ such that $\eta_5(0,0)=0$.

Note, from the cases above, that functions $\eta_i$ satisfy $\eta_i(0,0)=0$. In particular, if the Neumann and Dirichlet measurements are equal on the free surface $\Gamma(\zeta_0)$, then we have $|D|=0$. Therefore, we obtain a unique continuation property in the sense that if the measurements are equal, the bottoms are the same.

As was mentioned above, we will use some techniques developed in \cite{alessandrini2002detecting, beretta2017size}. These papers study the size estimates of an obstacle for the conductivity problem and the stationary Stokes system, respectively. That is, they considered an object completely immersed in the domain, which is different to our problem though, where the \emph{object} $D$ is the difference of two different bottoms and is at the solid boundary accordingly. Moreover, we are considering a homogeneous Neumann condition on the solid walls, which is natural in this context of water waves, and we are including the Dirichlet and Neumann data cases simultaneously on the free surface or, on an open subset of the free surface.  It is worth to mention that the estimation of inner cavities can be obtained as a particular case in our method.

Finally, since interactions between the free surface and the solid walls at the contact line is not well understood \cite{graham1983new,kim2015capillary}, and for avoiding cusps in the domain, we assume the free--end boundary condition, that the contact line can move vertically with a contact angle $\pi/2$.

The paper is structured as follows. In section 2, we provide some notation and preliminary results to be used along the sections that follow. In section 3 we deal with the main case of bottom detection, the case I. We state theorems \ref{th1} and \ref{th2} corresponding to the lower and upper cavity estimates, in terms of the Neumann and Dirichlet data,  simultaneously. In section 4 we study the more general case of bottom intersection. In this case we obtain a sort of logarithmic upper bound in terms of the data, which is expected from the literature. In section 5, we deal with the problem of partial measurements on the free surface for both cases I and II. In this case we obtain only an upper estimate. Finally, in section 6 we present some discussions and open problems related.

\section{Preliminaries}

In this section we introduce some definitions and previous results we will use throughout the paper.
Given $x\in\R^{N+1}$, we denote by $B_{r}(x)$ the open ball in $\R^{N+1}$, with center in $x$ and radius $r$. Also, $B_{r}'(0)$ denotes the ball in $\R^{N}$. We set $x=(x_1,\ldots,x_{N+1})$ as $x=(x',x_{N+1})$, where $x'=(x_1,\ldots,x_{N})$.

\begin{definition}{\cite[Definition 2.1]{alessandrini2002detecting}}
Let $\Omega\subset\R^{N+1}$ be a bounded domain. We shall say that $\partial\Omega$ is of class $C^{k,\alpha},$  with constants $r_0,\;M_0>0$, where $k$ is a nonnegative
integer and $\alpha\in[0,1)$, if, for any $x_0\in\partial\Omega,$ there exists a rigid transformation of coordinates, in which $x_0=0$ and
\begin{equation*}
\Omega\cap B_{r_0}(0)=\{x\in B_{r_0}(0):\;x_{n}>\varphi(x')\},
\end{equation*}
where $\varphi$ is a function of class $C^{k,\alpha}(B'_r(0))$, such that
\begin{equation*}
\begin{array}{l}
\varphi(0) =0,\\
\nabla\varphi(0)=0,\textrm{ if }k\geq 1, \\
\|\varphi\|_{C^{k,\alpha}(B'_{r_0}(0))}\leq M_0r_0.
\end{array}
\end{equation*}
\end{definition}
When $k=0$ and $\alpha=1$ we will say that $\partial\Omega$ is of Lipschitz class with constants $r_0,M_0$.

\begin{remark}
We normalize all norms in such a way that they are dimensionally equivalent to their argument, and coincide with the usual norms when $r_0=1$.
In this setup, the norm taken in the previous definition is intended as follows:
\begin{equation*}
\|\phi\|_{C^{k,\alpha}(B'_{r_0}(0))}=\sum_{i=0}^{k}r_0^{i}\|D^{i}\phi\|_{L^{\infty}(B_{r_0}'(0))}+r_0^{k+\alpha}|D^{k}\phi|_{\alpha,B_{r_0}'(0)},
\end{equation*}
where $|\cdot|$ represents the $\alpha$--H\"older seminorm
\begin{equation*}
|D^{k}\phi|_{\alpha,B_{r_0}'(0)}=\sup_{x',y'\in B_{r_0}'(0),x'\neq y'}\frac{|D^{k}\phi(x')-D^{k}\phi(y')|}{|x'-y'|^{\alpha}},
\end{equation*}
and $D^{k}\phi=\{D^{\beta}\phi\}_{|\beta|=k}$ is the set of derivatives of order $k$. Similarly we set the norms
\begin{equation*}
\begin{array}{rl}
\|u\|_{L^2(\Omega)}^2&=\displaystyle\frac{1}{r_0^{N+1}}\int_{\Omega}|u|^2,\\ \\
\|u\|_{H^1(\Omega)}^2&=\displaystyle \frac{1}{r_0^{N+1}}\left(\int_{\Omega}|u|^2+r_0^2\int_{\Omega}|\nabla u|^2\right).
\end{array}
\end{equation*}
\end{remark}

Next, we shall give some a--priori information concerning the domain $\Omega$ and the subdomain $D$ enclosed by the bottoms.
\begin{enumerate}
\item[{\bf(H1)}] We consider $\Omega\subset\R^{N+1}$ a bounded domain with connected boundary $\partial\Omega\in C^{1,1}$  with constants $r_0,M_0$. Further, there exists $M_1>0$ such that
\begin{equation}\label{Lipsch}
|\Omega|\leq M_1 r_{0}^{N+1}.
\end{equation}

\item[{\bf(H2)}] We consider $D\subset\Omega$ such that $\Omega\setminus D$ is connected and $D$ has a connected boundary $\partial D$ of Lipschitz class with constants $r,L$.

\item[{\bf(H3)}] $D$ satisfies {\bf (H2)} and the scale--invariant fatness condition with constant $Q>0$, that is
\begin{equation}\label{scale}
diam(D)\leq Qr.
\end{equation}

\item[{\bf(H4)}] Finally, we will assume that there exists a constant $h>0$, such that the \emph{fatness condition} holds, namely
\begin{equation}\label{fatness}
|D_{h}|\geq \frac 12 |D|,
\end{equation}
where we set for any $A\subset \R^{N+1}$ and $h>0$,
\begin{equation*}
A_{h}=\{x\in A:\; d(x,\partial A)>h\}.
\end{equation*}
\end{enumerate}

Let us mention some remarks about these hypotheses.
\begin{remark}
\begin{enumerate}
\item Concerning {\bf (H2)}, as explained in \cite{alessandrini2002detecting, beretta2017size}, the constant $r$ already incorporates information about the size of $D$. In addition, they showed that if $D$ has a Lipschitz boundary class with constants $r,L$, then
\begin{equation*}
|D|\geq C(L)r^{N+1}.
\end{equation*}
Besides, if condition {\bf (H3)} holds, then
\begin{equation*}
|D|\leq C(Q)r^{N+1}.
\end{equation*}
For that reason, as in \cite{alessandrini2002detecting, beretta2017size}, it will be necessary to consider $r$ as an unknown parameter.
\item Assumption {\bf (H4)} is classical in the context of size estimates (see for instance \cite{alessandrini2002detecting, beretta2017size, morassi2007size}). Moreover, if $D$ has boundary of class $C^{1,\alpha}$, it was proven in \cite{rosset1998inverse} that there exists a constant $h_1>0$ such that the condition (\ref{fatness}) holds. 
\end{enumerate}
\end{remark}

Additionallity, we shall consider the following Poincar\'e type inequality, which can be found in \cite{alessandrini2002detecting}.

\begin{proposition}{\cite[Proposition 3.2]{alessandrini2002detecting}}\label{poincare}
Let $D$ be a bounded domain in $\R^{N+1}$ of Lipschitz class with constants $r,L$ and satisfying condition (\ref{scale}) with constant $Q$. For every $u\in H^1(D)$ we have
\begin{eqnarray}
\int_{\partial D}|u-u_{\partial D}|^{2}\leq \overline{C_1}r\int_{D}|\nabla u|^{2},\label{23}\\
\int_{D}|u-u_{D}|^{2}\leq \overline{C_2}r^2\int_{D}|\nabla u|^{2},\label{23bis}
\end{eqnarray}
where
\begin{equation*}
u_{\partial D}=\frac{1}{|\partial D|}\int_{\partial D}u, \qquad u_{D}=\frac{1}{|D|}\int_{D}u,
\end{equation*}
and the constants $\overline{C_1},\overline{C_2}>0$ depend only on $L,Q$.
\end{proposition}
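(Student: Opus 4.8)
The plan is to reduce the two inequalities to a fixed, scale-normalized domain by a dilation, and then to establish them at unit scale with constants controlled solely by $L$ and $Q$; the explicit powers of $r$ will then appear automatically.

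Set $\tilde D = r^{-1}D$ and, for $u \in H^1(D)$, define $\tilde u(y) = u(ry)$ for $y \in \tilde D$. By construction $\tilde D$ is of Lipschitz class with constants $1,L$ and, by \eqref{scale}, satisfies $\mathrm{diam}(\tilde D)\le Q$. Since averages are dilation invariant, a change of variables $x = ry$, using $\nabla_y\tilde u(y) = r\,\nabla_x u(ry)$, gives
\begin{equation*}
\int_D|u-u_D|^2 = r^{N+1}\!\int_{\tilde D}|\tilde u-\tilde u_{\tilde D}|^2,\qquad \int_{\partial D}|u-u_{\partial D}|^2 = r^{N}\!\int_{\partial\tilde D}|\tilde u-\tilde u_{\partial\tilde D}|^2,\qquad \int_D|\nabla u|^2 = r^{N-1}\!\int_{\tilde D}|\nabla\tilde u|^2.
\end{equation*}
Hence, once \eqref{23} and \eqref{23bis} are proved in the case $r=1$ on $\tilde D$, substituting these identities produces precisely the factors $r$ and $r^2$ claimed in the statement.

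It then remains to prove the $r=1$ inequalities on a bounded connected Lipschitz domain of character $(1,L)$ with diameter at most $Q$, with constants depending only on $L$ and $Q$. For \eqref{23bis} I would cover $\partial\tilde D$ by finitely many balls in each of which $\tilde D$ is the subgraph of an $L$-Lipschitz function; together with an interior ball this yields a decomposition of $\tilde D$ into finitely many subdomains, each star-shaped with respect to a ball whose radius and number are controlled by $(L,Q)$. On each piece the Poincar\'e--Wirtinger inequality holds with a constant depending only on its Lipschitz character, and since $\tilde D$ is connected these local estimates can be chained across the overlaps into a global inequality with constant $\overline{C_2}(L,Q)$. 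The trace estimate \eqref{23} then follows: applying the trace theorem on $\tilde D$ to $v = \tilde u - \tilde u_{\tilde D}$ gives $\int_{\partial\tilde D}|v|^2 \le C(L,Q)\bigl(\int_{\tilde D}|v|^2 + \int_{\tilde D}|\nabla v|^2\bigr)$, the first term on the right is absorbed by \eqref{23bis} since $\nabla v = \nabla\tilde u$, and finally $\tilde u_{\tilde D}$ may be replaced on the left by $\tilde u_{\partial\tilde D}$ at no cost, because $\tilde u_{\partial\tilde D}$ is by definition the constant minimizing $c\mapsto\int_{\partial\tilde D}|\tilde u-c|^2$.

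The main obstacle is not the scaling, which is routine, but securing that the Poincar\'e and trace constants at unit scale depend on the geometry only through $L$ and $Q$, uniformly over the whole admissible class of domains. This uniformity is exactly what makes the estimate usable in the size-estimation scheme, where $r$ must be treated as an unknown parameter. It is guaranteed here by the fact that a Lipschitz domain of character $(1,L)$ with bounded diameter is a John (uniform) domain with parameters controlled by $(L,Q)$, and such domains support both the interior Poincar\'e and the trace inequalities with constants determined by those parameters.
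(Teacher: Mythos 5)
Your argument is correct. Note that the paper itself gives no proof of this proposition: it is quoted directly from \cite[Proposition 3.2]{alessandrini2002detecting}, and your route---rescaling to $\tilde D=r^{-1}D$ to make the powers of $r$ appear, proving the unit-scale interior Poincar\'e inequality by a covering/chaining argument with constants depending only on $(L,Q)$, and deducing the boundary estimate from the trace theorem plus the fact that $\tilde u_{\partial\tilde D}$ minimizes $c\mapsto\int_{\partial\tilde D}|\tilde u-c|^2$---is exactly the standard argument used in that reference. The only compressed step is the chaining: one must verify that the number of boundary charts and the measures of their overlaps are controlled by $(L,Q)$ alone, which is precisely where the diameter bound \eqref{scale} is needed; this is routine for connected Lipschitz domains of fixed character, but it is the one place where the uniformity over the admissible class actually has to be earned.
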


One important result when we are working in geometric inverse problems with one measurement is the following proposition (see \cite{alessandrini2003size}), known as \emph{Lipschitz propagation of smallness}.

\begin{proposition}{\cite[Lemma 2.2]{alessandrini2000optimal}}\label{pro1}
Let $\Omega$ be a bounded domain in $\R^{N+1}$, such that $\partial\Omega\in C^{1,1}$ with constants $r_0,M_0$.  Let $u\in H^1(\Omega)$ be the solution of the Neumann problem
\begin{equation}\label{10}
\left\{
\begin{array}{rll}
-\Delta u=0, &&\Omega,\\
\nabla u\cdot n=g, &&\partial\Omega.
\end{array}
\right.
\end{equation}
Then, for every $\rho>0$ and $x\in \Omega_{4\rho}$, we have
\begin{equation}\label{11}
\int_{B_{\rho}(x)}|\nabla u|^{2}dx\geq C_{\rho}\int_{\Omega}|\nabla u|^{2}dx,
\end{equation}
where the constant $C_{\rho}>0$ depends only on $|\Omega|$, $r_0$, $M_0$, $\frac{\|g\|_{L^2(\partial\Omega)}}{\|g\|_{H^{-1/2}(\partial\Omega)}}$, and $\rho$.
\end{proposition}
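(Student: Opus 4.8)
The plan is to obtain \eqref{11} by a quantitative unique continuation argument: a three--spheres inequality for $\nabla u$ fed into a chain--of--balls propagation, in the spirit of \cite{alessandrini2000optimal}. The starting observation is that, $u$ being harmonic, each partial derivative $\partial_j u$ is harmonic too, so the interior energy $r\mapsto\int_{B_r(y)}|\nabla u|^2$ has the logarithmic convexity typical of harmonic functions. First I would record the associated three--spheres inequality: there exist $C>0$ and $\theta\in(0,1)$, depending only on $N$, such that for concentric balls $B_{r_1}(y)\subset B_{r_2}(y)\subset B_{r_3}(y)\Subset\Omega$,
\begin{equation*}
\int_{B_{r_2}(y)}|\nabla u|^2\le C\left(\int_{B_{r_1}(y)}|\nabla u|^2\right)^{\theta}\left(\int_{B_{r_3}(y)}|\nabla u|^2\right)^{1-\theta}.
\end{equation*}
This follows from a Carleman estimate for $\Delta$, or from the almost--monotonicity of the Almgren frequency function; the crucial feature is that $\theta$ can be taken independent of the particular solution $u$.

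Next I would establish a lower bound at a fixed interior reference point $x_0\in\Omega_{4\rho}$, of the form $\int_{B_\rho(x_0)}|\nabla u|^2\ge c\int_\Omega|\nabla u|^2$. This is the step that uses the a priori quantity $\frac{\norm{g}{L^2(\partial\Omega)}}{\norm{g}{H^{-1/2}(\partial\Omega)}}$. Integrating \eqref{10} by parts gives $\int_\Omega|\nabla u|^2=\int_{\partial\Omega}g\,u$, and a bound on that ratio yields a uniform control of the vanishing order (frequency) of $u$, hence of the doubling constant; this prevents the Dirichlet energy from concentrating in an arbitrarily thin boundary layer and localizes a definite fraction of the total energy inside $B_\rho(x_0)$.

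The propagation step then carries this lower bound from $x_0$ to an arbitrary $x\in\Omega_{4\rho}$. Using that $\Omega$ is a domain, I would join $x_0$ and $x$ by a path inside $\Omega_{4\rho}$ and cover it by a chain of centres $x_0=y_0,\dots,y_k=x$ with $|y_{i+1}-y_i|\le\rho$, so that $B_\rho(y_i)\subset B_{2\rho}(y_{i+1})\subset B_{4\rho}(y_{i+1})\subset\Omega$. Applying the three--spheres inequality at each $y_{i+1}$ with radii $\rho,2\rho,4\rho$ and writing $H=\int_\Omega|\nabla u|^2$, one gets
\begin{equation*}
\int_{B_\rho(y_{i+1})}|\nabla u|^2\ge C^{-1/\theta}H^{-(1-\theta)/\theta}\left(\int_{B_\rho(y_i)}|\nabla u|^2\right)^{1/\theta},
\end{equation*}
so the normalized energies $E_i=H^{-1}\int_{B_\rho(y_i)}|\nabla u|^2$ satisfy $E_{i+1}\ge C^{-1/\theta}E_i^{1/\theta}$. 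Iterating $k$ times from $E_0\ge c$ yields $E_k\ge C_\rho>0$. The number $k$ of balls, and the very existence of the path, are controlled only through $|\Omega|$, $r_0$, $M_0$ (using \eqref{Lipsch} and the $C^{1,1}$ regularity of $\partial\Omega$), so $C_\rho$ depends exactly on the asserted quantities.

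The main obstacle is uniformity in $u$: the exponent $\theta$ and, above all, the doubling/frequency bound must be independent of the specific solution, since otherwise $C_\rho$ could degenerate to $0$. This is precisely what the hypothesis on $\frac{\norm{g}{L^2(\partial\Omega)}}{\norm{g}{H^{-1/2}(\partial\Omega)}}$ buys us: it bounds the frequency of $u$ at the reference scale, making the constants in both the three--spheres inequality and the reference lower bound uniform, and thereby closing the chain argument. The remaining geometric bookkeeping (length of the interior path, number of overlapping balls) is then routine given the a priori geometry.
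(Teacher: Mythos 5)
The paper offers no proof of this proposition: it is quoted verbatim from \cite[Lemma 2.2]{alessandrini2000optimal}, so there is nothing internal to compare against. Your sketch does follow the strategy of that reference --- a three--spheres inequality for the harmonic gradient propagated along a chain of balls --- and the propagation step is set up correctly: the recursion $E_{i+1}\ge C^{-1/\theta}E_i^{1/\theta}$ with $E_i=H^{-1}\int_{B_\rho(y_i)}|\nabla u|^2$ is exactly the right bookkeeping, and the exponent $\theta$ in the three--spheres inequality indeed depends only on the dimension and the ratios of the radii, not on $u$.

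The genuine gap is your second step, which is the only place the ratio $\|g\|_{L^2(\partial\Omega)}/\|g\|_{H^{-1/2}(\partial\Omega)}$ enters. The chain ``this ratio bounds the frequency of $u$, hence the doubling constant, hence the energy cannot concentrate in a thin boundary layer'' is asserted, not proved, and it is precisely the content that needs an argument; there is no off--the--shelf implication from a boundary--data ratio to an interior doubling bound. The actual mechanism is more concrete: one proves a collar estimate $\int_{\Omega\setminus\Omega_h}|\nabla u|^2\le C(h)\,\|g\|^2_{L^2(\partial\Omega)}$ with $C(h)\to 0$ as $h\to 0$ (using the $C^{1,1}$ regularity of $\partial\Omega$ with constants $r_0,M_0$), and combines it with the two--sided comparison $c\,\|g\|^2_{H^{-1/2}(\partial\Omega)}\le\int_\Omega|\nabla u|^2\le C\,\|g\|^2_{H^{-1/2}(\partial\Omega)}$, which follows from the weak formulation of \eqref{10} after normalizing $u$ to have zero mean. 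Writing $F=\|g\|_{L^2(\partial\Omega)}/\|g\|_{H^{-1/2}(\partial\Omega)}$, the collar energy is then at most $C(h)F^2\int_\Omega|\nabla u|^2$, and choosing $h$ so that $C(h)F^2\le\tfrac12$ forces $\int_{\Omega_h}|\nabla u|^2\ge\tfrac12\int_\Omega|\nabla u|^2$; a pigeonhole over a finite cover of $\Omega_h$ by $\rho$--balls then produces the reference ball with a definite energy fraction. Without this explicit collar estimate your ``localizes a definite fraction of the total energy inside $B_\rho(x_0)$'' has no support. A secondary caveat: the chain argument requires any two points of $\Omega_{4\rho}$ to be joinable by a path along which the balls $B_{4\rho}(y_i)$ remain in $\Omega$; connectivity of $\Omega_{4\rho}$ is not automatic for every $\rho>0$ and must itself be extracted from the a priori geometry (or the statement restricted to $\rho$ below a threshold depending on $r_0$, $M_0$ and \eqref{Lipsch}), so it deserves more than the phrase ``routine bookkeeping''.
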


Finally, we need the following stability estimate related to ill--posed Cauchy problems for the Laplace equation in domains with $C^{1,1}$ boundary. 

\begin{proposition}{\cite[Corollary 2.1]{bourgeois2010stability}}\label{estabilidad}
Let $\Omega$ be a bounded and connected domain $\Omega\subset \R^{N+1}$ with a $C^{1,1}$ boundary $\partial\Omega$. If $\Gamma_0$ is a nonempty open set of $\partial\Omega$, then for all $k\in (0,1)$,  there exists  $C,\delta_0>0 $ such that for every $\delta\in (0,\delta_0)$ and for any $u\in H^2(\Omega)$ with $\Delta u=0$ and
\begin{equation*}
 \|u\|_{H^2(\Omega)}\leq M, \quad \|u\|_{H^1(\Gamma_0)} + \|\partial_n u\|_{L^2(\Gamma_0)}\leq \delta,
\end{equation*}
with $M$ a positive constant, we have
\begin{equation}\label{estability-estimate}
\|u\|_{H^1(\Omega)}\leq C M\left[\log\left(\frac{M}{\|u\|_{H^1(\Gamma_0)} + \|\partial_n u\|_{L^2(\Gamma_0)}}\right)\right]^{-k}.
\end{equation}
\end{proposition}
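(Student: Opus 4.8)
The statement is a conditional logarithmic stability estimate for the ill--posed Cauchy problem, and I would prove it by the now--standard route of \emph{quantitative unique continuation}: the smallness of the Cauchy data on $\Gamma_0$ is propagated into the interior with a H\"older rate, this rate is tracked as one approaches the inaccessible part $\Gamma_1:=\partial\Omega\setminus\overline{\Gamma_0}$ of the boundary, and the resulting interior bound is combined with an a~priori control of a thin boundary layer. The plan is to fix the notation $\eta:=\|u\|_{H^1(\Gamma_0)}+\|\partial_n u\|_{L^2(\Gamma_0)}$ and $\Omega_\rho:=\{x\in\Omega:\dist(x,\partial\Omega)>\rho\}$, to produce an interior bound of the form $\|u\|_{H^1(\Omega_\rho)}\le CM(\eta/M)^{\tau(\rho)}$ together with a layer bound $\|u\|_{H^1(\Omega\setminus\Omega_\rho)}\le CM\rho^{1/2}$, and finally to optimize in $\rho$.

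First I would establish the propagation tool. Since $\Delta u=0$, a Carleman estimate for the Laplacian (available with controlled constants on $C^{1,1}$ domains) yields a three--spheres inequality $\|u\|_{L^2(B_{2r})}\le C\|u\|_{L^2(B_{r})}^{\theta}\|u\|_{L^2(B_{3r})}^{1-\theta}$ on concentric balls $B_{3r}\subset\Omega$, with $\theta\in(0,1)$ independent of $r$. A boundary variant of the same estimate converts the smallness of the Cauchy pair $(u,\partial_n u)$ on $\Gamma_0$ into smallness of $\|u\|_{L^2}$ on a fixed ball seated against $\Gamma_0$ inside $\Omega$. From that ball I would chain the three--spheres inequality along a path in $\Omega$ reaching an arbitrary point of $\Omega_\rho$, choosing at each step balls whose radius is comparable to the distance to $\Gamma_1$; because this distance has to be halved only a controlled number of times, the chain uses $O(\log(1/\rho))$ steps, so that the composed H\"older exponent degrades merely \emph{polynomially}, $\tau(\rho)\ge c\,\rho^{\beta}$ for geometric constants $c,\beta>0$. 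Upgrading the interior $L^2$ bound to the $H^1$ bound is then routine via Caccioppoli's inequality.

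Next I would control the boundary layer. Using the a~priori bound $\|u\|_{H^2(\Omega)}\le M$, the $C^{1,1}$ regularity of $\partial\Omega$, and an interpolation (Hardy--type) inequality on the collar $\Omega\setminus\Omega_\rho$ of width $\rho$, one obtains $\|u\|_{H^1(\Omega\setminus\Omega_\rho)}\le CM\rho^{1/2}$. Adding the two contributions gives
\[
\|u\|_{H^1(\Omega)}\le CM\Big[\exp\!\big(-\tau(\rho)\,\log(M/\eta)\big)+\rho^{1/2}\Big].
\]
It then remains to optimize: choosing $\rho$ as a suitable negative power of $\log(M/\eta)$ balances the two terms and, since $\tau(\rho)\sim\rho^{\beta}$, converts the exponential factor into the single logarithm $\big[\log(M/\eta)\big]^{-k}$; the loss incurred at the balancing point is precisely what forces $k<1$ and makes every $k\in(0,1)$ admissible.

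The main obstacle, and where the genuine work lies, is the quantitative near--boundary propagation under the minimal $C^{1,1}$ regularity. One must select Carleman weights (or geometric/conformal changes of variables) whose constants do not blow up as the balls in the chain approach $\Gamma_1$, and one must track the dependence of the H\"older exponent on $\rho$ finely enough to secure a \emph{polynomial} degradation --- hence a single logarithm --- rather than an exponential one, which would yield only the weaker double logarithm. Keeping the constants dimensionally consistent and uniform in the stated parameters, so that $C$ and $\delta_0$ depend only on the geometry and on $k$, is the remaining technical point.
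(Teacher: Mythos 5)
The paper offers no proof of this proposition: it is imported verbatim from the cited reference of Bourgeois on ill-posed Cauchy problems in $C^{1,1}$ domains, so the only thing to compare your sketch against is the source's own argument and the precise statement being claimed. Your overall architecture --- propagate smallness from $\Gamma_0$ into $\Omega_\rho$ by a three--spheres chain, control the collar $\Omega\setminus\Omega_\rho$ through the a priori $H^2$ bound, then optimize in $\rho$ --- is the standard route to \emph{a} logarithmic estimate, and your observation that a radial descent toward the inaccessible boundary costs only $O(\log(1/\rho))$ steps, hence a polynomially degrading H\"older exponent $\tau(\rho)\sim c\rho^{\beta}$, is correct and is exactly what upgrades a double logarithm to a single one.

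The genuine gap is the quantifier ``for all $k\in(0,1)$''. Write $L=\log(M/\eta)$. With the interior bound $CM\exp(-c\rho^{\beta}L)$ and the collar bound $CM\rho^{1/2}$, requiring both terms to be at most $L^{-k}$ forces $\rho\le L^{-2k}$ and $c\rho^{\beta}L\ge k\log L$, which are compatible for large $L$ only when $k<1/(2\beta)$; the unconstrained optimum in $\rho$ does no better than $L^{-1/(2\beta)}$ up to $\log\log$ corrections. Here $\beta=\log_2(1/\theta)$ is a \emph{fixed} geometric constant determined by the three--spheres exponent $\theta$, and there is no reason for $1/(2\beta)$ to reach $1$ (typically $\theta<1/2$, so $1/(2\beta)<1/2$). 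Thus your argument yields the estimate for \emph{some} $k_0>0$ depending on the geometry, not for every $k\in(0,1)$, and your closing sentence (``the loss incurred at the balancing point is precisely what forces $k<1$ and makes every $k\in(0,1)$ admissible'') asserts exactly the point that needs proof. The full range of exponents --- which is the actual content of the cited corollary and is what the paper invokes in Theorems \ref{th4}, \ref{th5} and \ref{th6} --- is obtained in the source not by chaining but by a Carleman estimate in a neighbourhood of the inaccessible boundary with a weight built from a regularized distance function (this is where the $C^{1,1}$ hypothesis enters); the loss there is of the form $e^{C/s}$ against a remainder that is polynomial in $s$, and since the Cauchy data have size $Me^{-L}$, the choice $1/s\sim L^{k}$ absorbs the exponential loss precisely for every $k<1$. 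That mechanism, not the three--spheres balance, delivers the stated range of exponents.
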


\section{Case I}\label{s_DN}

Now, we will study our first case.
Let $\Omega(b_0,\zeta_0)$ be the domain such that $\partial\Omega=\Gamma(\zeta_0)\cup\Gamma_w(b_0,\zeta_0)\cup\Gamma(b_0)$ and  $\Omega(b_1,\zeta_0)$ the domain with boundary $\partial\Omega(b_1,\zeta_0)=\Gamma(\zeta_0)\cup\Gamma_w(b_1,\zeta_0)\cup\Gamma(b_1)$. We will assume that $b_1\geq b_0$. Now, we consider $\phi_0,\phi$ be the unique weak solutions of the following problems
\begin{equation}\label{caseI-1}
\!\!\!\!\!\!\!\!\!\!\!\!\! \left\{
\begin{array}{rll}
\Delta\phi_0=0, &&\Omega(b_0,\zeta_0), \\
\phi_0=\psi_0, &&\Gamma(\zeta_0), \\
\partial_n\phi_0=0, &&\Gamma(b_0)\cup\Gamma_w(b_0,\zeta_0),
\end{array}
\right.
\qquad
\left\{
\begin{array}{rll}
\Delta\phi=0, &&\Omega(b_1,\zeta_0), \\
\phi=\psi, &&\Gamma(\zeta_0), \\
\partial_n\phi=0, & &\Gamma(b_1)\cup\Gamma_w(b_1,\zeta_0).
\end{array}
\right.
\end{equation}

Since we are performing the measurements on the common free surface given by $\Gamma(\zeta_0)$, for differentiate it we denote by $\Gamma_{up}$ this surface. We observe that the two bottoms generate a subdomain $D$, given by $D=\Omega(b_0,\zeta_0)\setminus\Omega(b_1,\zeta_0)$. Therefore, denoting by $\Omega:=\Omega(b_0,\zeta_0)$ we can rewrite systems in (\ref{caseI-1}) as (see Figure \ref{Fig1}):
\begin{equation}\label{caseI-2}
\left\{
\begin{array}{rll}
\Delta\phi_0=0, &&\Omega, \\
\phi_0=\psi_0, &&\Gamma_{up}, \\
\partial_n\phi_0=0, &&\Gamma(b_0)\cup\Gamma_w(b_0,\zeta_0),
\end{array}
\right.
\end{equation}
\begin{equation}\label{caseI-3}
\left\{
\begin{array}{rll}
\Delta\phi=0, &&\Omega\setminus D, \\
\phi=\psi, &&\Gamma_{up}, \\
\partial_n\phi=0, &&\Gamma(b_1)\cup\Gamma_w(b_1,\zeta_0),
\end{array}
\right.
\end{equation}


%

	\begin{figure}[h!]
	\centering
		\includegraphics[scale=1.0]{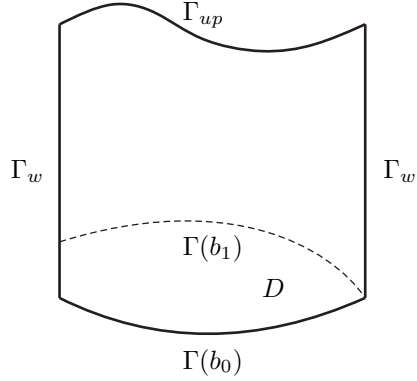}
		\put(-70,120){$\Gamma_{up}$}
		\put(-135,60){$\Gamma_w$}
		\put(6,60){$\Gamma_w$}
		\put(-70,-12){$\Gamma(b_0)$}
		\put(-70,31){$\Gamma(b_1)$}
		\put(-40,15){$D$}
	\caption{The two domains to compare the bottom and the Dirichlet and Neumann data on the free surface.}
	\label{Fig1}
	\end{figure}

%

Now, we are in position to state and prove the following identity which plays  a fundamental role in the proof of our main results. 

\begin{lemma}\label{lemma1} Let $\phi_0\in H^2(\Omega)$ and $\phi\in H^2(\Omega\setminus D)$ be the solutions of problems (\ref{caseI-2}) and (\ref{caseI-3}), respectively. Then, we have the following identities:
\begin{equation}\label{lem1-0}
\int_{\Omega\setminus D}|\nabla(\phi-\phi_0)|^2+\int_D|\nabla\phi_0|^2=\int_{\Gamma_{up}}\partial_n\phi(\psi-\psi_0)+\int_{\Gamma_{up}}(\partial_n\phi_0-\partial_n\phi)\psi_0,
\end{equation}
and
\begin{equation}\label{lem1}
2\int_{\Gamma(b_1)}\partial_n\phi_0\phi=\int_{\Gamma_{up}}(\partial_n\phi-\partial_n\phi_0)(\psi_0+\psi)+\int_{\Gamma_{up}}(\partial_n\phi+\partial_n\phi_0)(\psi_0-\psi).
\end{equation}
\end{lemma}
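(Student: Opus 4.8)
The plan is to obtain both identities purely from Green's first and second identities, the only care needed being the bookkeeping of which boundary condition is available on which portion of each boundary. I would first record the decompositions $\partial\Omega=\Gamma_{up}\cup\Gamma_w(b_0,\zeta_0)\cup\Gamma(b_0)$ and $\partial(\Omega\setminus D)=\Gamma_{up}\cup\Gamma_w(b_1,\zeta_0)\cup\Gamma(b_1)$, with the outward normal to $\Omega\setminus D$ fixed consistently throughout. The single observation that makes everything work is that, because $b_1\geq b_0$, the lateral piece $\Gamma_w(b_1,\zeta_0)$ is contained in $\Gamma_w(b_0,\zeta_0)$ with matching outward normals; hence the homogeneous Neumann condition satisfied by $\phi_0$ on $\Gamma_w(b_0,\zeta_0)\cup\Gamma(b_0)$ in particular yields $\partial_n\phi_0=0$ on $\Gamma_w(b_1,\zeta_0)$ as well. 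By contrast, $\partial_n\phi_0$ need not vanish on $\Gamma(b_1)$, and this is precisely the term that survives in the second identity. The $H^2$ regularity of $\phi_0$ and $\phi$ guarantees that all traces and normal derivatives lie in the appropriate $L^2$ spaces, so every integration by parts below is legitimate.

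For (\ref{lem1-0}) I would expand $|\nabla(\phi-\phi_0)|^2=|\nabla\phi|^2-2\nabla\phi\cdot\nabla\phi_0+|\nabla\phi_0|^2$ over $\Omega\setminus D$ and merge its $\phi_0$-contribution with $\int_D|\nabla\phi_0|^2$ into $\int_\Omega|\nabla\phi_0|^2$. Three applications of Green's identity then evaluate the pieces: since $\partial_n\phi=0$ on $\Gamma_w(b_1,\zeta_0)\cup\Gamma(b_1)$ and $\phi=\psi$ on $\Gamma_{up}$, one gets $\int_{\Omega\setminus D}|\nabla\phi|^2=\int_{\Gamma_{up}}\psi\,\partial_n\phi$; since $\partial_n\phi_0=0$ on $\Gamma_w(b_0,\zeta_0)\cup\Gamma(b_0)$ and $\phi_0=\psi_0$ on $\Gamma_{up}$, one gets $\int_\Omega|\nabla\phi_0|^2=\int_{\Gamma_{up}}\psi_0\,\partial_n\phi_0$. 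The decisive choice occurs in the cross term: integrating by parts so that the derivative falls on $\phi$ rather than on $\phi_0$ exploits the fact that $\phi$ has homogeneous Neumann data on all of $\Gamma_w(b_1,\zeta_0)\cup\Gamma(b_1)$, giving the clean value $\int_{\Omega\setminus D}\nabla\phi\cdot\nabla\phi_0=\int_{\Gamma_{up}}\psi_0\,\partial_n\phi$ (had the derivative been placed on $\phi_0$, a nonvanishing $\Gamma(b_1)$ contribution would appear). Substituting these three evaluations and collecting the $\Gamma_{up}$ integrals reproduces the right-hand side of (\ref{lem1-0}) after elementary algebra.

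For (\ref{lem1}) I would first simplify its right-hand side: expanding the two products $(\partial_n\phi-\partial_n\phi_0)(\psi_0+\psi)$ and $(\partial_n\phi+\partial_n\phi_0)(\psi_0-\psi)$ shows the mixed terms cancel, leaving $2\int_{\Gamma_{up}}(\psi_0\,\partial_n\phi-\psi\,\partial_n\phi_0)$. It therefore suffices to prove $\int_{\Gamma(b_1)}\phi\,\partial_n\phi_0=\int_{\Gamma_{up}}(\psi_0\,\partial_n\phi-\psi\,\partial_n\phi_0)$. This follows from Green's second identity applied to the pair $\phi,\phi_0$, both harmonic in $\Omega\setminus D$, namely $0=\int_{\partial(\Omega\setminus D)}(\phi_0\,\partial_n\phi-\phi\,\partial_n\phi_0)$. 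On $\Gamma_{up}$ one substitutes $\phi_0=\psi_0$ and $\phi=\psi$; on $\Gamma_w(b_1,\zeta_0)$ both $\partial_n\phi=0$ and, by the containment noted in the first paragraph, $\partial_n\phi_0=0$, so that entire piece drops out; on $\Gamma(b_1)$ only $\partial_n\phi=0$, leaving the term $-\int_{\Gamma(b_1)}\phi\,\partial_n\phi_0$. Rearranging then yields the claimed identity. The only genuine subtlety, and the step I expect to require the most care, is exactly this treatment of the lateral boundary: one must justify rigorously that $\Gamma_w(b_1,\zeta_0)\subset\Gamma_w(b_0,\zeta_0)$ with matching normals so that $\partial_n\phi_0$ really does vanish there, since otherwise a spurious wall contribution would survive and break the identity.
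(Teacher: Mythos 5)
Your proof is correct and follows essentially the same route as the paper's: the three Green-identity evaluations $\int_{\Omega\setminus D}|\nabla\phi|^2=\int_{\Gamma_{up}}\psi\,\partial_n\phi$, $\int_{\Omega}|\nabla\phi_0|^2=\int_{\Gamma_{up}}\psi_0\,\partial_n\phi_0$, $\int_{\Omega\setminus D}\nabla\phi\cdot\nabla\phi_0=\int_{\Gamma_{up}}\psi_0\,\partial_n\phi$ combined with the energy expansion for \eqref{lem1-0}, and the reciprocity relation $\int_{\Gamma(b_1)}\phi\,\partial_n\phi_0=\int_{\Gamma_{up}}(\psi_0\,\partial_n\phi-\psi\,\partial_n\phi_0)$ for \eqref{lem1}, which the paper packages as the sum of its identities (i5) and (i6) while you obtain it by algebraically collapsing the right-hand side of \eqref{lem1} directly. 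Your explicit justification that $\Gamma_w(b_1,\zeta_0)\subset\Gamma_w(b_0,\zeta_0)$ with matching normals, so that $\partial_n\phi_0=0$ there, is a point the paper leaves implicit and is a welcome addition.
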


\begin{proof}
By multiplying the first equation of (\ref{caseI-3}) by $\phi_0$ and $\phi$, integrating in $\Omega\setminus D$ and using the boundary conditions accordingly, we obtain
\begin{equation}
\label{i1}
\int_{\Omega\setminus D}\nabla\phi\cdot\nabla\phi_0=\int_{\Gamma_{up}}\partial_n\phi\psi_0,
\end{equation}
and
\begin{equation}
\label{i2}
\int_{\Omega\setminus D}|\nabla\phi|^2=\int_{\Gamma_{up}}\partial_n\phi\psi,
\end{equation}
respectively.

Moreover, integrating again by parts in (\ref{i1}):
\begin{equation*}
\label{i3}
\int_{\Gamma_{up}}\partial_n\phi_0\psi+\int_{\Gamma(b_1)}\partial_n\phi_0\phi=\int_{\Gamma_{up}}\partial_n\phi\psi_0,
\end{equation*}
or
\begin{equation*}
\label{i4}
\int_{\Gamma(b_1)}\partial_n\phi_0\phi=\int_{\Gamma_{up}}\partial_n\phi\psi_0-\int_{\Gamma_{up}}\partial_n\phi_0\psi.
\end{equation*}

From this last equality we can get the following equivalent identities,
\begin{equation}
\label{i5}
\int_{\Gamma(b_1)}\partial_n\phi_0\phi=\int_{\Gamma_{up}}(\partial_n\phi-\partial_n\phi_0)\psi_0+\int_{\Gamma_{up}}\partial_n\phi_0(\psi_0-\psi),
\end{equation}
and
\begin{equation}
\label{i6}
\int_{\Gamma(b_1)}\partial_n\phi_0\phi=\int_{\Gamma_{up}}(\partial_n\phi-\partial_n\phi_0)\psi+\int_{\Gamma_{up}}\partial_n\phi(\psi_0-\psi).
\end{equation}

In a similar way, multiplying the first equation of (\ref{caseI-2}) by $\phi$ and integrating in $\Omega\setminus D$
\begin{equation*}
\label{i7}
\int_{\Omega\setminus D}\nabla\phi_0\cdot\nabla\phi=\int_{\Gamma_{up}}\partial_n\phi_0\psi+\int_{\Gamma(b_1)}\partial_n\phi_0\phi.
\end{equation*}

If we multiply by $\phi_0$ instead, we obtain
\begin{equation}
\label{i8}
\int_\Omega|\nabla\phi_0|^2=\int_{\Gamma_{up}}\partial_n\phi_0\psi_0.
\end{equation}

Moreover, from (\ref{i1}), (\ref{i2}), and (\ref{i8}) we get 
\begin{eqnarray*}
\int_{\Omega\setminus D}|\nabla(\phi-\phi_0)|^2+\int_D|\nabla\phi_0|^2&=\int_{\Omega\setminus D}|\nabla\phi|^2-2\int_{\Omega\setminus D}\nabla\phi_0\cdot\nabla\phi+\int_\Omega|\nabla\phi_0|^2 \\
&=\int_{\Gamma_{up}}\partial_n\phi\psi-2\int_{\Gamma_{up}}\partial_n\phi\psi_0+\int_{\Gamma_{up}}\partial_n\phi_0\psi_0 \\
&=\int_{\Gamma_{up}}\partial_n\phi(\psi-\psi_0)+\int_{\Gamma_{up}}(\partial_n\phi_0-\partial_n\phi)\psi_0.
\end{eqnarray*}

Notice that, from equations (\ref{i5}) and (\ref{i6}), one obtains the identity
\begin{equation*}
2\int_{\Gamma(b_1)}\partial_n\phi_0\phi=\int_{\Gamma_{up}}(\partial_n\phi-\partial_n\phi_0)(\psi_0+\psi)+\int_{\Gamma_{up}}(\partial_n\phi+\partial_n\phi_0)(\psi_0-\psi).
\end{equation*}
\end{proof}

Next, we state and prove our main results concerning the determination of the total variation of the bottom.

The next theorem proves the lower bound for the size of cavity $D$. It is similar to that obtained in \cite{alessandrini2002detecting} for the electrostatic potential. However, we have considered the Neumann and Dirichlet measurements simultaneously and the presence of a cavity at the boundary.

\begin{theorem}\label{th1}
Let $\phi_0\in H^2(\Omega)$ and $\phi\in H^2(\Omega\setminus D)$ be the solutions of problems (\ref{caseI-2}) and (\ref{caseI-3}), respectively. In addition, assume that $\Omega$ satisfies {\bf (H1)} and the subdomain $D$ satisfies {\bf (H3)}. 
Then, there exists a positive constant $C_1>0$ depending only on $\Omega,L,Q$  such that
\begin{equation}\label{the1}
\frac{\left(\int_{\Gamma_{up}}(\partial_n\phi-\partial_n\phi_0)\psi-\int_{\Gamma_{up}}\partial_n\phi(\psi-\psi_0)\right)^2}{\int_{\Gamma_{up}}\psi_0\partial_n\phi_0\int_{\Gamma_{up}}\psi\partial_n\phi}\le C_1|D|.
\end{equation}
\end{theorem}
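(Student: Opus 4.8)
The plan is to reduce both the numerator and the denominator of (\ref{the1}) to energy integrals, and then to bound the numerator by a Cauchy--Schwarz argument whose two factors carry, respectively, the volume $|D|$ and the two energies appearing in the denominator.

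First I would rewrite the bracket in the numerator. Expanding the two surface integrals and cancelling the common term $\int_{\Gamma_{up}}\partial_n\phi\,\psi$ gives
\[
\int_{\Gamma_{up}}(\partial_n\phi-\partial_n\phi_0)\psi-\int_{\Gamma_{up}}\partial_n\phi(\psi-\psi_0)=\int_{\Gamma_{up}}\partial_n\phi\,\psi_0-\int_{\Gamma_{up}}\partial_n\phi_0\,\psi .
\]
Integrating $\int_{\Omega\setminus D}\nabla\phi\cdot\nabla\phi_0$ by parts in the two possible ways---moving the derivative onto $\phi$, which gives (\ref{i1}), and onto $\phi_0$, using that $\partial_n\phi_0=0$ on $\Gamma_w(b_1,\zeta_0)\subset\Gamma_w(b_0,\zeta_0)$ since $b_1\ge b_0$---yields the key identity
\[
\int_{\Gamma_{up}}\partial_n\phi\,\psi_0-\int_{\Gamma_{up}}\partial_n\phi_0\,\psi=\int_{\Gamma(b_1)}\partial_n\phi_0\,\phi .
\]
Thus the numerator of (\ref{the1}) is $\big(\int_{\Gamma(b_1)}\partial_n\phi_0\,\phi\big)^2$. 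For the denominator, identities (\ref{i2}) and (\ref{i8}) give $\int_{\Gamma_{up}}\psi_0\partial_n\phi_0=\int_\Omega|\nabla\phi_0|^2=:E_0$ and $\int_{\Gamma_{up}}\psi\partial_n\phi=\int_{\Omega\setminus D}|\nabla\phi|^2=:E$, so (\ref{the1}) is equivalent to $\big(\int_{\Gamma(b_1)}\partial_n\phi_0\,\phi\big)^2\le C_1\,|D|\,E_0\,E$.

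Next I would estimate $\int_{\Gamma(b_1)}\partial_n\phi_0\,\phi$. Testing $\Delta\phi_0=0$ in $D$ against the constant $1$ and using $\partial_n\phi_0=0$ on $\partial D\setminus\Gamma(b_1)$ shows $\int_{\Gamma(b_1)}\partial_n\phi_0=0$, so the integral is unchanged when $\phi$ is replaced by $\phi-c$ for any constant $c$. Choosing $c$ to be the mean of $\phi$ and letting $v\in H^1(D)$ be a bounded Sobolev extension into $D$ of $(\phi-c)|_{\Gamma(b_1)}$, integration by parts in $D$ gives $\int_{\Gamma(b_1)}\partial_n\phi_0\,\phi=-\int_D\nabla\phi_0\cdot\nabla v$, whence by Cauchy--Schwarz
\[
\Big(\int_{\Gamma(b_1)}\partial_n\phi_0\,\phi\Big)^2\le \Big(\int_D|\nabla\phi_0|^2\Big)\Big(\int_D|\nabla v|^2\Big).
\]
The first factor I would control by the volume: since $\phi_0$ is harmonic in $\Omega$ and $D$ lies at a positive distance from the Dirichlet boundary $\Gamma_{up}$, interior and homogeneous-Neumann boundary gradient estimates give $\|\nabla\phi_0\|_{L^\infty(D)}^2\le C\,E_0$, so $\int_D|\nabla\phi_0|^2\le C\,|D|\,E_0$---this is where the factor $|D|$ enters. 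The second factor I would control by the exterior energy, $\int_D|\nabla v|^2\le C\,E$, using the uniform extension estimate together with the Poincar\'e inequality (\ref{23}). Combining the three displays gives $\big(\int_{\Gamma(b_1)}\partial_n\phi_0\,\phi\big)^2\le C\,|D|\,E_0\,E$, i.e. (\ref{the1}).

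The main obstacle is the second factor: producing an extension $v$ of the trace $\phi|_{\Gamma(b_1)}$---a trace of a function living only on $\Omega\setminus D$---into the thin, $D$-dependent region with $\int_D|\nabla v|^2\le C\,E$ and $C$ depending only on the a priori constants. This is exactly the point where hypotheses \textbf{(H2)}--\textbf{(H3)} (Lipschitz boundary with controlled constants $r,L$ and the scale-invariant fatness $Q$) and Proposition \ref{poincare} must be invoked to guarantee uniformity of the extension constant in $D$; the companion point, that $\|\nabla\phi_0\|_{L^\infty(D)}^2\le C\,E_0$ with $C$ purely geometric, relies on the gradient regularity of $\phi_0$ up to the Neumann boundary and on the separation of $D$ from $\Gamma_{up}$, which together with (\ref{Lipsch}) make $C_1$ depend only on $\Omega,L,Q$.
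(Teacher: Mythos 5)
Your proposal is correct, and its skeleton coincides with the paper's: the same double integration by parts of $\int_{\Omega\setminus D}\nabla\phi\cdot\nabla\phi_0$ reduces the numerator of (\ref{the1}) to $\bigl(\int_{\Gamma(b_1)}\partial_n\phi_0\,\phi\bigr)^2$ (the paper's identities (\ref{i1})--(\ref{i6})), the denominator is identified with the two energies via (\ref{i2}) and (\ref{i8}), and the factor $|D|$ enters exactly as in the paper, through a pointwise bound $\sup_D|\nabla\phi_0|^2\le C E_0$ obtained by making $D$ an interior set (reflection across the Neumann boundary) and using interior estimates. The one step you do differently is the Cauchy--Schwarz split of $\int_{\Gamma(b_1)}\partial_n\phi_0\,\phi$: the paper stays on the surface, pairing $\|\partial_n\phi_0\|_{L^2(\Gamma(b_1))}$ (bounded via the trace theorem by $\|\nabla\phi_0\|_{H^1(D)}\le C|D|^{1/2}\sup_D(\cdot)$, see (\ref{e4})--(\ref{e6})) with $\|\phi-\bar\phi\|_{L^2(\Gamma(b_1))}$ (bounded by the trace Poincar\'e inequality (\ref{23}) and $E$); you instead convert the surface integral into $-\int_D\nabla\phi_0\cdot\nabla v$ for an extension $v$ of the trace of $\phi-c$ and apply Cauchy--Schwarz in the volume. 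Your variant buys something real: it only needs $\|\nabla\phi_0\|_{L^2(D)}\le|D|^{1/2}\sup_D|\nabla\phi_0|$ and so avoids both the trace theorem for the normal derivative and the slightly delicate estimate $\|\nabla\phi_0\|_{H^1(D)}^2\le C|D|\sup_D|\nabla\phi_0|^2$ (which in the paper tacitly requires interior bounds on second derivatives as well). The price is a uniform $H^{1/2}(\Gamma(b_1))\to H^1(D)$ extension operator whose norm depends only on the a priori Lipschitz data of $D$; you correctly identify this as the crux. One small correction there: the boundary Poincar\'e inequality (\ref{23}) alone is not the right tool for $\int_D|\nabla v|^2\le CE$, since an $L^2(\Gamma(b_1))$ bound on $\phi-c$ does not control the $H^{1/2}$ trace norm the extension operator needs; take $c$ to be the mean of $\phi$ over $\Omega\setminus D$ and combine a volume Poincar\'e inequality of type (\ref{23bis}) on $\Omega\setminus D$ with the trace theorem to get $\|\phi-c\|_{H^{1/2}(\Gamma(b_1))}\le C\|\nabla\phi\|_{L^2(\Omega\setminus D)}$, after which the extension bound closes the argument.
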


\begin{proof}
Note first that integrating the first equation of (\ref{caseI-2}) on $\Omega$ and applying the divergence theorem, we obtain
\begin{equation*}
0=\int_{\Gamma_{up}}\partial_n\phi_0.
\end{equation*}

Thus, if we denote by $\bar{\phi}=\int_{\Gamma(b_1)}\phi$, from identity (\ref{i6}) and H\"older's inequality we get
\begin{eqnarray}
\label{e2}
\int_{\Gamma_{up}}(\partial_n\phi-\partial_n\phi_0)\psi-\int_{\Gamma_{up}}\partial_n\phi(\psi-\psi_0)=\int_{\Gamma(b_1)}\partial_n\phi_0\phi \nonumber\\
=\int_{\Gamma(b_1)}\partial_n\phi_0(\phi-\bar{\phi}) 
\le \left(\int_{\Gamma(b_1)}(\partial_n\phi_0)^2\right)^{1/2}\left(\int_{\Gamma(b_1)}(\phi-\bar{\phi})^2\right)^{1/2}.
\end{eqnarray}

First, from the Poincar\'e type inequality (\ref{23}), there exists a constant $C>0$ such that the second term in the right hand side of (\ref{e2}) can be estimated as
\begin{equation}
\label{e3}
\int_{\Gamma(b_1)}(\phi-\bar{\phi})^2\le C\int_{\Omega\setminus D}|\nabla\phi|^2 
=C\int_{\Gamma_{up}}\psi\partial_n\phi.
\end{equation}

Second, we need an estimate that allows for control the normal derivative of $\phi_0$, by the gradient of the function. Namely, 
\begin{equation}
\label{e4}
\int_{\Gamma(b_1)}(\partial_n\phi_0)^2\le C|D|\int_{\Gamma_{up}}\psi_0\partial_n\phi_0.
\end{equation}

To prove (\ref{e4}), let $\phi_0\in H^2(\Omega)$. Since $D$ is piecewise $C^1$, by the trace theorem
\begin{equation}
\label{e5}
\int_{\Gamma(b_1)}(\partial_n\phi_0)^2=\int_{\Gamma(b_1)}(\partial_n(\phi_0-\bar{\phi}_0))^2 \le C\|\nabla(\phi_0-\bar{\phi}_0)\|_{H^1(D)}^2.
\end{equation}

Now, since $\phi_0$ satisfy $\Delta \phi_0=0$ in $\Omega$ and $D$ is bounded we obtain 
\begin{equation}
\label{e6}
\|\nabla(\phi_0-\bar{\phi}_0)\|_{H^1(D)}^2\le C|D|\sup_D|\nabla(\phi_0-\bar{\phi}_0)|^2.
\end{equation}

To end the proof, we need to use some classical elliptic estimates as well as some interior estimates as in \cite{alessandrini2002detecting}. Since $\partial D\cap\partial\Omega\ne\emptyset$, the way to transform $D$ an inner set is the following: first, extend $\Omega$ downward, by symmetry. Then the new domain can be extended, one time, rightward and leftward by a harmonic function. This can be done thanks to the zero Neumann boundary condition satisfied by $\phi_0$ in $\Omega$. To avoid cusps at the top of the boundary, $\Gamma_{up}$, we assume the free-end boundary condition for inviscid fluids in contact with solid walls, that the contact line can move vertically with a contact angle $\pi/2$ \cite{benjamin1979gravity,graham1983new}.

Therefore, we obtain a new larger domain $\widetilde{\Omega}$ such that $D\subset\subset \widetilde{\Omega}$, as in Figure \ref{Fig4}. 

In this case, we have that there exists a positive constant $d_0>0$ such that 
\begin{equation}\label{H2}
d(D,\partial\widetilde{\Omega})\geq d_0 >0.
\end{equation}
Let $\widetilde{\Omega}_{\frac{d_0}{2}}$ be an intermediate domain. Recalling that $d(D,\partial\widetilde{\Omega})\geq d_0$, we have $d(D,\partial\widetilde{\Omega}_{d_{0}/2})\geq\frac{d_0}{2}.$ 
In addition, we extend the solution $\phi_0$ of (\ref{caseI-2}) by $\widetilde{\phi}_0$ to the new domain $\widetilde{\Omega}$, to obtain:
\begin{equation}\label{ec1.1}
\left\{
\begin{array}{rll}
\Delta\widetilde{\phi}_0=0, &&\widetilde{\Omega}\setminus D, \\
\widetilde{\phi}_0=\widetilde{\psi}_0, &&\widetilde{\Gamma}_{up}\cup\widetilde{\Gamma}(b_0), \\
\partial_n\widetilde{\phi}_0=0, &&\widetilde{\Gamma}(b_0)\cup\widetilde{\Gamma}_w(b_0,\zeta_0),
\end{array}
\right.
\end{equation}
with $\widetilde{\Gamma}_{up}$, $\widetilde{\Gamma}(b_0)$, and $\widetilde{\Gamma}_w(b_0,\zeta_0)$ being the corresponding extensions of $\Gamma_{up}$, $\Gamma(b_0)$, and $\Gamma_w(b_0,\zeta_0)$, respectively.

	\begin{figure}[h!]
	\centering
		\includegraphics[scale=0.7]{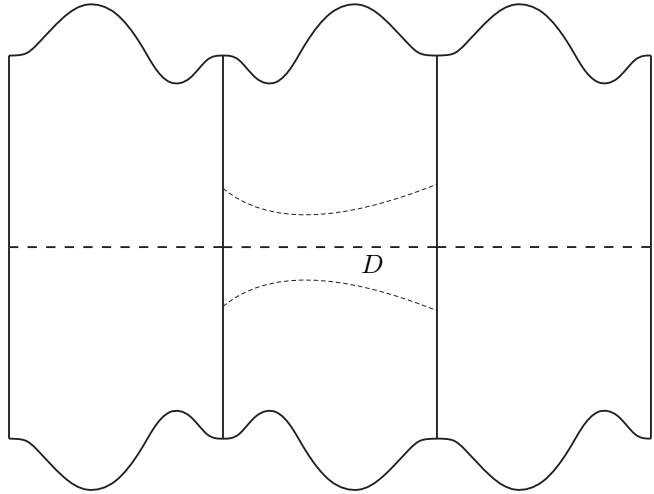}
		\put(-110,83){$D$}
		\caption{The larger domain by reflection to make $D$ an inner set.}
		\label{Fig4}
	\end{figure}

Following the ideas in \cite[Page 61]{alessandrini2000optimal} we have
\begin{equation}
\label{e7}
\sup_D|\nabla(\widetilde{\phi}_0-\bar{\phi}_0)|^2 \le C\sup_{\widetilde{\Omega}_{\frac{d_0}{2}}}|\widetilde{\phi}_0-\bar{\phi}_0|^2\le C\|\widetilde{\phi}_0-\bar{\phi}_0\|_{L^2(\widetilde{\Omega})}^2.
\end{equation}

Thus, by Poincar\'e's inequality (\ref{23bis}), we have
\begin{equation}
\label{e8}
\int_{\widetilde{\Omega}}|\widetilde{\phi}_0-\bar{\phi}_0|^2 \le C\int_{\widetilde{\Omega}}|\nabla\widetilde{\phi}_0|^2 =C\int_{\Gamma_{up}}\psi_0\partial_n\phi_0.
\end{equation}

Then, from (\ref{e5}) and (\ref{e8}), we get (\ref{e4}). Putting together (\ref{e2}) and (\ref{e4}) we obtain the estimate 
\begin{equation*}
\frac{\left(\int_{\Gamma_{up}}(\partial_n\phi-\partial_n\phi_0)\psi-\int_{\Gamma_{up}}\partial_n\phi(\psi-\psi_0)\right)^2}{\int_{\Gamma_{up}}\psi_0\partial_n\phi_0\int_{\Gamma_{up}}\psi\partial_n\phi}\le C|D|,
\end{equation*}
and the proof finishes.
\end{proof}

Our second main result of this section is the following upper bound of the total variation of the bottom. As before, this estimate is proved for cavities at the boundary and includes both, Dirichlet and Neumann measurements, at the free surface.

\begin{theorem}\label{th2}
Let $\phi_0\in H^2(\Omega)$ and $\phi_1\in H^2(\Omega\setminus D)$ be the solutions of problems (\ref{caseI-2}) and (\ref{caseI-3}), respectively. Assume that $\Omega$ satisfies {\bf (H1)} and the subdomain $D$ satisfies {\bf (H2)} and  {\bf (H4)}.
Then, there exists a constant $C_2>0$ such that 
\begin{equation}\label{the2}
\frac{\int_{\Gamma_{up}}\partial_n\phi(\psi-\psi_0)+\int_{\Gamma_{up}}(\partial_n\phi_0-\partial_n\phi)\psi_0}{\int_{\Gamma_{up}}\psi_0\partial_n\phi_0}\ge C_2|D|.
\end{equation}
The constant $C_2$ depends only on $|\Omega|$, $r_0$, $M_0$, $\frac{\|\partial_n\phi_0\|_{L^2(\Gamma_{up})}}{\|\partial_n\phi_0\|_{H^{-1/2}(\Gamma_{up})}}$.
\end{theorem}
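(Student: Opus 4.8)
The plan is to reduce the quotient in (\ref{the2}) to a lower bound for the fraction of the Dirichlet energy of $\phi_0$ that is captured inside the cavity $D$, and then to produce that bound from the Lipschitz propagation of smallness. First I would rewrite the numerator of (\ref{the2}) by means of identity (\ref{lem1-0}) of Lemma \ref{lemma1}, so that it equals $\int_{\Omega\setminus D}|\nabla(\phi-\phi_0)|^2+\int_D|\nabla\phi_0|^2$, and rewrite the denominator by (\ref{i8}) as $\int_\Omega|\nabla\phi_0|^2$. Discarding the nonnegative term $\int_{\Omega\setminus D}|\nabla(\phi-\phi_0)|^2$, the theorem is reduced to proving
\[
\int_D|\nabla\phi_0|^2\ \ge\ C_2\,|D|\int_\Omega|\nabla\phi_0|^2 .
\]

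Next I would recast $\phi_0$ so that Proposition \ref{pro1} applies. Although $\phi_0$ solves the mixed problem (\ref{caseI-2}), its normal derivative vanishes on $\Gamma(b_0)\cup\Gamma_w(b_0,\zeta_0)$, so $\phi_0$ is exactly the solution of the homogeneous Neumann problem (\ref{10}) on $\Omega$ with datum $g:=\partial_n\phi_0$ supported on $\Gamma_{up}$ and extended by zero. This is precisely what forces the final constant to depend only on $|\Omega|,r_0,M_0$ and on the ratio $\|\partial_n\phi_0\|_{L^2(\Gamma_{up})}/\|\partial_n\phi_0\|_{H^{-1/2}(\Gamma_{up})}$, as stated. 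Proposition \ref{pro1} then yields, for a radius $\rho$ to be fixed and every center $x\in\Omega_{4\rho}$,
\[
\int_{B_\rho(x)}|\nabla\phi_0|^2\ \ge\ C_\rho\int_\Omega|\nabla\phi_0|^2 .
\]

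Then I would harvest this local bound over the cavity using the fatness hypothesis {\bf (H4)}. Fix $\rho$ comparable to $h$ (say $\rho=h/4$). By {\bf (H4)} the deep part $D_h$ carries at least half the volume of $D$, and for $x\in D_h$ one has $B_\rho(x)\subset D$. Choosing a maximal family of pairwise disjoint balls $\{B_\rho(x_j)\}_{j=1}^J$ with $x_j\in D_h$, maximality forces $D_h\subset\bigcup_j B_{2\rho}(x_j)$, whence $J\ge |D_h|/|B_{2\rho}|\ge |D|/(2|B_{2\rho}|)$. Summing the local bound over these disjoint balls, all contained in $D$, gives
\[
\int_D|\nabla\phi_0|^2\ \ge\ \sum_{j=1}^J\int_{B_\rho(x_j)}|\nabla\phi_0|^2\ \ge\ J\,C_\rho\int_\Omega|\nabla\phi_0|^2\ \ge\ C_2\,|D|\int_\Omega|\nabla\phi_0|^2 ,
\]
which is the required inequality.

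The main obstacle is verifying that the centers lie in $\Omega_{4\rho}$, i.e. that being deep inside $D$ forces being deep inside $\Omega$. This is where the boundary nature of the cavity, in contrast with the interior inclusions of \cite{alessandrini2002detecting,beretta2017size}, must be handled: the portion of $\partial D$ sitting on $\partial\Omega$ (the bottom $\Gamma(b_0)$ and the walls below $b_1$) is automatically at distance $>h$ from any $x\in D_h$, while the only portion of $\partial\Omega$ not belonging to $\partial D$, namely $\Gamma_{up}$ together with the walls above $b_1$, lies beyond the interface $\Gamma(b_1)$ and is therefore even farther from $x$. This gives $D_h\subset\Omega_{4\rho}$ for $\rho=h/4$ (shrinking $\rho$ by a fixed factor if needed). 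Once this geometric point is settled, the displayed chain closes and fixes the dependence of $C_2$.
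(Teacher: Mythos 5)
Your proposal is correct and follows essentially the same route as the paper: reduce via identity (\ref{lem1-0}) and (\ref{i8}) to the bound $\int_D|\nabla\phi_0|^2\ge C|D|\int_\Omega|\nabla\phi_0|^2$, then obtain this from the Lipschitz propagation of smallness (Proposition \ref{pro1}) combined with the fatness condition {\bf (H4)}. The only cosmetic difference is that you use a maximal family of disjoint balls centered in $D_h$ where the paper uses a uniform mesh of boxes and a minimal box $q_0$; your explicit verification that $D_h\subset\Omega_{4\rho}$ is a point the paper leaves implicit.
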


\begin{proof}
We observe that, by (\ref{lem1-0}), we have
\begin{equation}
\label{e10}
\int_D|\nabla\phi_0|^2\le\int_{\Gamma_{up}}\partial_n\phi(\psi-\psi_0)+\int_{\Gamma_{up}}(\partial_n\phi_0-\partial_n\phi)\psi_0.
\end{equation}

Based on Alessandrini \cite{alessandrini2000optimal}, we proceed as follows. Let $D_h\subset\subset D$ such that $|D_h|\ge\frac{1}{2}|D|$ and consider $Q_\alpha$ a uniform mesh of boxes $q$ of side $\varepsilon$ for $D_h$, such that $D_h\subset\bigcup Q_\alpha\subset D$. Then, 
\begin{equation}
\label{e12}
\int_D|\nabla\phi_0|^2\ge \sum_{q\cap D_h\ne\emptyset}\int_q|\nabla\phi_0|^2\ge\frac{\int_{q_0}|\nabla\phi_0|^2}{|q_0|}\sum_{q\cap D_h\ne\emptyset}|q|
=\frac{|D_h|}{|q_0|}\int_{q_0}|\nabla\phi_0|^2,
\end{equation}
where $q_0\subset Q_\alpha$ is such that
\begin{equation*}
\min_{q}\int_q|\nabla\phi_0|^2=\int_{q_0}|\nabla\phi_0|^2.
\end{equation*}
Since $\phi_0$ is the unique weak solution of (\ref{caseI-2}), we obtain that the previous minimum is strictly positive.

Let $x_0$ be the center of $q_0$. Using the estimate (\ref{11}) in Proposition \ref{pro1} with $x=x_0$ and $r=\frac{\varepsilon}{2}$, we have that
\begin{equation}\label{e12.1}
\int_{q_0}|\nabla\phi_0|^2\geq C(|q_0|)\int_\Omega|\nabla\phi_0|^2.
\end{equation}

Therefore, replacing (\ref{e12.1}) in (\ref{e12}) and using the fatness condition (\ref{fatness}), we obtain 
\begin{equation*}
\int_D|\nabla\phi_0|^2\ge\frac{|D|}{2|q_0|}C(|q_0|)\int_{\Gamma_{up}}\psi_0\partial_n\phi_0.
\end{equation*}

Then, from (\ref{e10}) we deduce the desired result
\begin{equation*}
\frac{\int_{\Gamma_{up}}\partial_n\phi(\psi-\psi_0)+\int_{\Gamma_{up}}(\partial_n\phi_0-\partial_n\phi)\psi_0}{\int_{\Gamma_{up}}\psi_0\partial_n\phi_0}\ge C(|q_0|)|D|.
\end{equation*}
\end{proof}

Let us make the following comments about the results obtained so far.
\begin{remark}
\begin{enumerate}
\item Estimates in Theorems \ref{th1} and \ref{th2} are, in some sense, more general to those from the literature. For instance, if the Neumann measurements of $\phi$ and $\phi_0$ are equal in $\Gamma_{up}$, then (\ref{the1}) and (\ref{the2}) become
\begin{equation}\label{remark1}
C_1\frac{\left(\int_{\Gamma_{up}}\partial_n\phi(\psi-\psi_0)\right)^2}{\int_{\Gamma_{up}}\psi_0\partial_n\phi_0\int_{\Gamma_{up}}\psi\partial_n\phi}\leq |D|\leq C_2\frac{\int_{\Gamma_{up}}\partial_n\phi(\psi-\psi_0)}{\int_{\Gamma_{up}}\psi_0\partial_n\phi_0},
\end{equation}
which corresponds to those in \cite{alessandrini2003size, alessandrini2000optimal, beretta2017size}.
\item In the more general case when the bottoms $b_0,b_1$ are such that $b_1(x)\ge b_0(x)$ for any $x\in S$, and $D=\cup_{i=1}^nD_i$ is such that $\Omega\setminus D$ is connected, estimates in Theorems \ref{th1} and \ref{th2} also holds.
\end{enumerate}
\end{remark}

\section{Case II}\label{sec3}

If we consider now the possibility of finite intersections between the bottoms, estimates above are still true and something similar is obtained. Let us change notation slightly and consider $\phi_1$ and $\phi_2$ defined on $\Omega_1:=\Omega(b_1,\zeta_0)$ and  $\Omega_2:=\Omega(b_2,\zeta_0)$, respectively. In this case we assume that, near to the bottom, $\Omega_1\cap\Omega_2\neq \emptyset$ and we are still measuring on the free common surface region $\Gamma_{up}:=\Gamma(\zeta_0)$. Without loss of generality, we will assume that the bottom intersections are at least one and at most three. Then, if we define $\Gamma(b_1)=:\Gamma_b^1\cup\Gamma_d^2$,  $\Gamma(b_2)=:\Gamma_d^1\cup\Gamma_b^2$ as in Figure \ref{Fig3}, we consider the following two problems
\begin{equation}
\label{bi1}
\left\{
\begin{array}{rll}
\Delta\phi_1=0, &&\Omega_1, \\
\phi_1=\psi_1, &&\Gamma_{up}, \\
\partial_n\phi_1=0, &&\Gamma(b_1)\cup\Gamma_w(b_1,\zeta_0),
\end{array}
\right.
\;
\left\{
\begin{array}{rll}
\Delta\phi_2=0, &&\Omega_2, \\
\phi_2=\psi_2, &&\Gamma_{up}, \\
\partial_n\phi_2=0, &&\Gamma(b_2)\cup \Gamma_w(b_2,\zeta_0).
\end{array}
\right.
\end{equation}

	\begin{figure}[h!]
	\centering
		\includegraphics[scale=1.0]{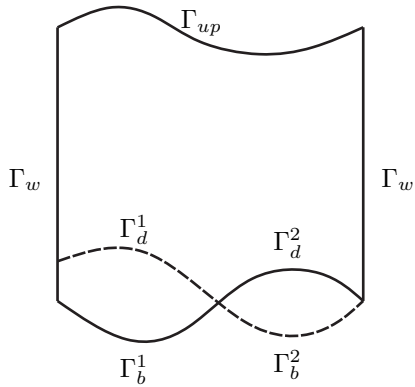}
		\put(-70,120){$\Gamma_{up}$}
		\put(-135,60){$\Gamma_w$}
		\put(6,60){$\Gamma_w$}
		\put(-93,41){$\Gamma_d^1$}
		\put(-93,-12){$\Gamma_b^1$}
		\put(-35,36){$\Gamma_d^2$}
		\put(-35,-10){$\Gamma_b^2$}
		\caption{The two domains when bottoms are intersected and the measurements are performed on a common free surface $\Gamma_{up}$.}
		\label{Fig3}
	\end{figure}

We start with the lower bound of the set $\Omega_1\vartriangle\Omega_2$. In this case, we will use some arguments from the previous sections. The first main result is a restatement of Theorem \ref{th1} above, where we present the lower bound arising from the new geometric assumptions.

\begin{theorem}
Assume that $\Omega_1,\Omega_2$ satisfy {\bf (H1)} with constants $L_1,Q_1$ and $L_2,Q_2$, respectively,  and $\Omega_1\vartriangle \Omega_2$ satisfies {\bf (H3)}. Then, there exists a constant $C_3>0$ such that
\begin{equation}
\label{bi11}
\frac{\left(\int_{\Gamma_{up}}\left[\partial_n\phi_2(\phi_1-\phi_2)+(\partial_n\phi_2-\partial_n\phi_1)\phi_2\right]\right)^2}{\left(\int_{\Gamma_{up}}\phi_1\partial_n\phi_1\right)^2+\left(\int_{\Gamma_{up}}\phi_2\partial_n\phi_2\right)^2}\le C_3|\Omega_1\vartriangle\Omega_2|.
\end{equation}
The constant $C_3$ depends only on $\Omega, L_1,Q_1,L_2,Q_2$.
\end{theorem}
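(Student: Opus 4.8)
The plan is to run the argument of Theorem \ref{th1} twice, once for each ``one--sided'' cavity created by the crossings of the bottoms. Set $D_1:=\Omega_1\setminus\overline{\Omega_2}$ and $D_2:=\Omega_2\setminus\overline{\Omega_1}$, so that $\Omega_1\vartriangle\Omega_2=D_1\cup D_2$ (a disjoint union) and $|\Omega_1\vartriangle\Omega_2|=|D_1|+|D_2|$. With the labelling of Figure \ref{Fig3}, $\Gamma_d^1\subset\Gamma(b_2)$ is the upper interface of $D_1$ and $\Gamma_d^2\subset\Gamma(b_1)$ is the upper interface of $D_2$, while $\Gamma_b^1\subset\Gamma(b_1)$ and $\Gamma_b^2\subset\Gamma(b_2)$ are the genuine bottom pieces. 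First I would simplify the numerator: on $\Gamma_{up}$ one has $\phi_i=\psi_i$, and the integrand collapses to $\partial_n\phi_2(\phi_1-\phi_2)+(\partial_n\phi_2-\partial_n\phi_1)\phi_2=\psi_1\partial_n\phi_2-\psi_2\partial_n\phi_1$.

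Then I apply Green's identity to the pair $\phi_1,\phi_2$ on the common region $\Omega_1\cap\Omega_2$, whose boundary is $\Gamma_{up}$, the side walls, and $\Gamma_d^1\cup\Gamma_d^2$. The wall contributions vanish because both potentials satisfy the homogeneous Neumann condition there; on $\Gamma_d^1\subset\Gamma(b_2)$ we have $\partial_n\phi_2=0$ and on $\Gamma_d^2\subset\Gamma(b_1)$ we have $\partial_n\phi_1=0$. This reduces the measurement to the interface integrals
\begin{equation*}
\int_{\Gamma_{up}}\left(\psi_1\partial_n\phi_2-\psi_2\partial_n\phi_1\right)=\int_{\Gamma_d^1}\phi_2\,\partial_n\phi_1-\int_{\Gamma_d^2}\phi_1\,\partial_n\phi_2,
\end{equation*}
(with the normals oriented consistently) the exact analogue of identity (\ref{i6}). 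A divergence-theorem computation on $D_1$ (resp. $D_2$), using $\partial_n\phi_1=0$ on $\Gamma_b^1$ and on the walls (resp. $\partial_n\phi_2=0$ on $\Gamma_b^2$ and the walls), gives $\int_{\Gamma_d^1}\partial_n\phi_1=0$ and $\int_{\Gamma_d^2}\partial_n\phi_2=0$, which lets me subtract arbitrary constants from the Dirichlet factors before invoking Cauchy--Schwarz.

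Next I would estimate each interface term as in Theorem \ref{th1}. For the Dirichlet factors, since $\Gamma_d^1\subset\partial\Omega_2$ and $\Gamma_d^2\subset\partial\Omega_1$, Proposition \ref{poincare} (inequality (\ref{23})) together with the energy identities $\int_{\Omega_i}|\nabla\phi_i|^2=\int_{\Gamma_{up}}\psi_i\partial_n\phi_i$ yields $\int_{\Gamma_d^1}|\phi_2-\overline{\phi_2}|^2\le C\int_{\Gamma_{up}}\psi_2\partial_n\phi_2$ and $\int_{\Gamma_d^2}|\phi_1-\overline{\phi_1}|^2\le C\int_{\Gamma_{up}}\psi_1\partial_n\phi_1$. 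For the normal-derivative factors, $\Gamma_d^1$ (resp. $\Gamma_d^2$) is the upper boundary of the cavity $D_1$ (resp. $D_2$), on which $\phi_1$ (resp. $\phi_2$) is harmonic; so the trace theorem, the interior gradient bound of \cite{alessandrini2000optimal} and the Poincar\'e inequality (\ref{23bis}), applied after the same downward/sideways reflection that makes $D_i$ an interior set, give $\int_{\Gamma_d^1}|\partial_n\phi_1|^2\le C|D_1|\int_{\Gamma_{up}}\psi_1\partial_n\phi_1$ and $\int_{\Gamma_d^2}|\partial_n\phi_2|^2\le C|D_2|\int_{\Gamma_{up}}\psi_2\partial_n\phi_2$, i.e. the analogue of (\ref{e4}). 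Writing $A=\int_{\Gamma_{up}}\psi_1\partial_n\phi_1\ge0$ and $B=\int_{\Gamma_{up}}\psi_2\partial_n\phi_2\ge0$ and combining the four bounds via Cauchy--Schwarz gives $\left|\int_{\Gamma_{up}}(\psi_1\partial_n\phi_2-\psi_2\partial_n\phi_1)\right|\le C(|D_1|^{1/2}+|D_2|^{1/2})(AB)^{1/2}$; squaring, using $|D_i|\le|\Omega_1\vartriangle\Omega_2|$ together with $(|D_1|^{1/2}+|D_2|^{1/2})^2\le2(|D_1|+|D_2|)$ and $AB\le\frac12(A^2+B^2)$, produces exactly (\ref{bi11}).

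The step I expect to be the main obstacle is the reflection/interior-estimate argument for the normal-derivative bound in the presence of crossings. At a contact point $b_1(x)=b_2(x)$ the cavities $D_1$ and $D_2$ touch and the interfaces $\Gamma_d^i$ meet the true bottoms, so making each $D_i$ strictly interior to an enlarged harmonicity domain $\widetilde\Omega$, and verifying $\dist(D_i,\partial\widetilde\Omega)\ge d_0>0$ as in (\ref{H2}), requires care that the reflected domains stay admissible and cusp-free. This is precisely where the free--end $\pi/2$ contact-angle assumption and the regularity/fatness hypotheses on $\Omega_1\vartriangle\Omega_2$ enter, and where the restriction to at most three crossings keeps the geometry controllable.
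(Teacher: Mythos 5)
Your proposal is correct and follows essentially the same route as the paper: the same Green's identity on $\Omega_1\cap\Omega_2$ reducing the surface measurement to the interface integrals over $\Gamma_d^1,\Gamma_d^2$, the same Cauchy--Schwarz split after centering, the Poincar\'e inequality (\ref{23}) for the Dirichlet traces, and the trace-theorem/interior-estimate/reflection argument (the analogue of (\ref{e4})) for the normal derivatives, with only a cosmetic difference in the final algebra ($AB\le\tfrac12(A^2+B^2)$ versus bounding each product separately). Your explicit justification of $\int_{\Gamma_d^i}\partial_n\phi_i=0$ via the divergence theorem on $D_i$ is a detail the paper leaves implicit, and your flagged concern about making the reflection admissible near the crossing points is exactly where the paper's hypotheses on $\Omega_1\vartriangle\Omega_2$ are used.
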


\begin{proof}
As it was done in the proof of Theorem \ref{th1} , we have
\begin{equation*}
\int_{\Omega_1\cap\Omega_2}\nabla\phi_1\cdot\nabla\phi_2=\int_{\partial(\Omega_1\cap\Omega_2)}\phi_1\partial_n\phi_2=\int_{\partial(\Omega_1\cap\Omega_2)}\phi_2\partial_n\phi_1,
\end{equation*}
which, by the boundary conditions, is equivalent to
\begin{equation*}
\int_{\Gamma_{up}}(\phi_1\partial_n\phi_2-\phi_2\partial_n\phi_1)=\int_{\Gamma_d^1}\phi_2\partial_n\phi_1-\int_{\Gamma_d^2}\phi_1\partial_n\phi_2,
\end{equation*}
or
\begin{equation}
\label{bi4}
\int_{\Gamma_{up}}\left[\partial_n\phi_2(\phi_1-\phi_2)+(\partial_n\phi_2-\partial_n\phi_1)\phi_2\right]=\int_{\Gamma_d^1}\phi_2\partial_n\phi_1-\int_{\Gamma_d^2}\phi_1\partial_n\phi_2.
\end{equation}

Thus, if we consider the energy  $E$ defined by
\begin{equation*}
E=\int_{\Omega_1\cap\Omega_2}|\nabla(\phi_1-\phi_2)|^2+\int_{\Omega_1\setminus\Omega_2}|\nabla\phi_1|^2+\int_{\Omega_2\setminus\Omega_1}|\nabla\phi_2|^2, 
\end{equation*}
proceeding as we did in Section \ref{s_DN}, and using (\ref{bi4}), we obtain
\begin{equation*}
E=\int_{\Gamma_{up}}\left[\partial_n\phi_1(\phi_1-\phi_2)+(\partial_n\phi_2-\partial_n\phi_1)\phi_2\right]-2\int_{\Gamma_d^1}\phi_2\partial_n\phi_1.
\end{equation*}

Notice that, in the last identity one can replace $\int_{\Gamma_d^1}\phi_2\partial_n\phi_1$ by $\int_{\Gamma_d^2}\phi_1\partial_n\phi_2$ instead.

Moreover, using the boundary conditions we can write identity above as
\begin{equation}
\label{bi6}
E=\int_{\Gamma_{up}}\left[\partial_n\phi_1(\phi_1-\phi_2)+(\partial_n\phi_2-\partial_n\phi_1)\phi_2\right]-2\int_{\Gamma_d^1}\phi_2(\partial_n\phi_1-\partial_n\phi_2).
\end{equation}

Now, from (\ref{bi4}), and following the arguments presented in Section \ref{s_DN},
\begin{multline}
\label{bi7}
\int_{\Gamma_{up}}\left[\partial_n\phi_2(\phi_1-\phi_2)+(\partial_n\phi_2-\partial_n\phi_1)\phi_2\right] 
\le\left(\int_{\Gamma_d^1}(\partial_n\phi_1)^2\right)^{1/2}\left(\int_{\Gamma_d^1}(\phi_2-\bar{\phi}_2)^2\right)^{1/2} \\
+\left(\int_{\Gamma_d^2}(\partial_n\phi_2)^2\right)^{1/2}\left(\int_{\Gamma_d^2}(\phi_1-\bar{\phi}_1)^2\right)^{1/2}.
\end{multline}

As before, for $i=1,2$, using the Poincar\'e type inequality (\ref{23}), we obtain 
\begin{equation}
\label{bi8}
\int_{\Gamma_d^i}(\phi_i-\bar{\phi}_i)^2 \le C\int_{\Omega_1\cap\Omega_2}|\nabla\phi_i|^2\le C\int_{\Omega_i}|\nabla\phi_i|^2=C\int_{\Gamma_{up}}\phi_i\partial_n\phi_i.
\end{equation}

Moreover, for $i=1,2$, if $\phi_i\in H^2(\Omega_i)$ and since $\Omega_1\vartriangle\Omega_2$ is piecewise $C^1$, by the trace theorem, we have 
\begin{equation}
\label{bi9}
\int_{\Gamma_d^i}(\partial_n\phi_i)^2=\int_{\Gamma_d^i}(\partial_n(\phi_i-\bar{\phi}_i))^2 \le C\|\nabla(\phi_i-\bar{\phi}_i)\|_{H^1(\Omega_i\setminus\Omega_{3-i})}^2.
\end{equation}

Repeating the arguments from Section \ref{s_DN}; specifically the extension of the domain in such a way that the set $\Omega_1\vartriangle\Omega_2$ becomes an inner subset of a larger domain $\widetilde{\Omega}$, we obtain the following
\begin{align}\label{bi9.1}
\|\nabla(\phi_i-\bar{\phi}_i)\|_{H^1(\Omega_i\setminus\Omega_{3-i})}^2&\le C\|\nabla(\phi_i-\bar{\phi}_i)\|_{L^2(\Omega_i\setminus\Omega_{3-i})}^2 \nonumber\\
&\le C|\Omega_i\setminus\Omega_{3-i}|\sup_{\Omega_i\setminus\Omega_{3-i}}|\nabla(\phi_i-\bar{\phi}_i)|^2 \nonumber\\
&\le C|\Omega_i\setminus\Omega_{3-i}|\sup_{\Omega_i\setminus\Omega_{3-i}}|\phi_i-\bar{\phi}_i|^2 \nonumber\\
&\le C|\Omega_i\setminus\Omega_{3-i}|\int_{\Omega_i\setminus\Omega_{3-i}}|\phi_i-\bar{\phi}_i|^2 \nonumber\\
&\le C|\Omega_i\setminus\Omega_{3-i}|\int_{\Omega_i\setminus\Omega_{3-i}}|\nabla\phi_i|^2 \nonumber\\
&\le C|\Omega_i\setminus\Omega_{3-i}|\int_{\Omega_i}|\nabla\phi_i|^2 \nonumber\\
&=C|\Omega_i\setminus\Omega_{3-i}|\int_{\Gamma_{up}}\phi_i\partial_n\phi_i.
\end{align}

Putting together (\ref{bi7})--(\ref{bi9.1}), we obtain
\begin{align*}
\left(\int_{\Gamma_{up}}\left[\partial_n\phi_2(\phi_1-\phi_2)+(\partial_n\phi_2-\partial_n\phi_1)\phi_2\right]\right)^2 
\le C|\Omega_1\setminus\Omega_2|\left(\int_{\Gamma_{up}}\phi_1\partial_n\phi_1\right)^2+C|\Omega_2\setminus\Omega_1|\left(\int_{\Gamma_{up}}\phi_2\partial_n\phi_2\right)^2,
\end{align*}
which implies

\begin{equation*}
\frac{\left(\int_{\Gamma_{up}}\left[\partial_n\phi_2(\phi_1-\phi_2)+(\partial_n\phi_2-\partial_n\phi_1)\phi_2\right]\right)^2}{\left(\int_{\Gamma_{up}}\phi_1\partial_n\phi_1\right)^2+\left(\int_{\Gamma_{up}}\phi_2\partial_n\phi_2\right)^2}\le C|\Omega_1\vartriangle\Omega_2|.
\end{equation*}
\end{proof}

We complement the theorem above with the following upper bound for the variation of the bottom.
\begin{theorem}\label{th4}
Assume that $\Omega_1,\Omega_2$ satisfy {\bf (H1)} with constants $L_1,Q_1$ and $L_2,Q_2$, respectively, $\Omega_1\vartriangle\Omega_2$ satisfies {\bf (H2)}, and $\Omega_1\setminus\Omega_2, \Omega_2\setminus\Omega_1$ satisfy {\bf (H4)}, with constants $r_1,M_1$ and $r_2,M_2$, respectively. Then, for all $k\in(0,1)$ and $i=1,2$, there exist $C_4,\delta_0 >0$ such that for every $\delta\in (0,\delta_0)$ and
\begin{equation*}
 \|\phi_i\|_{H^2((\Omega_1\cup\Omega_2)\setminus(\Omega_1\vartriangle\Omega_2))}\leq M, \quad \|\phi_i\|_{H^1(\Gamma_{up})} + \|\partial_n \phi_i\|_{L^2(\Gamma_{up})}\leq \delta, 
\end{equation*}
where $M>0$, we obtain
\begin{equation}\label{binew}
|\Omega_1\vartriangle\Omega_2| \le C_4 \sum_{i=1}^2\frac{\int_{\Gamma_{up}}\left[\partial_n\phi_1(\phi_1-\phi_2)+(\partial_n\phi_2-\partial_n\phi_1)\phi_2\right]+ A}{\int_{\Gamma_{up}}\phi_i\partial_n\phi_i},
\end{equation}
where $A$ is given by
\begin{equation*}
A:=\|\phi_2\|_{H^1(\Omega_2)}\left[ \log\left(\frac{M}{\|\phi_1-\phi_2\|_{H^1(\Gamma_{up})} + \|\partial_n (\phi_1-\phi_2)\|_{L^2(\Gamma_{up})}}\right)\right]^{-k},
\end{equation*}
and  the constant $C_4$ depends only on $|\Omega_i|$, $r_i$, $M_i$, $\frac{\|\partial_n\phi_i\|_{L^2(\Gamma_{up})}}{\|\partial_n\phi_i\|_{H^{-1/2}(\Gamma_{up})}}$.
\end{theorem}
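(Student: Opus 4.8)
The plan is to mirror the proof of Theorem \ref{th2}, with the energy identity (\ref{bi6}) of the preceding theorem playing the role of (\ref{lem1-0}), and to absorb the new interior cross-term living on the crossing surface $\Gamma_d^1$ by means of the Cauchy stability estimate of Proposition \ref{estabilidad}. Throughout I write $w=\phi_1-\phi_2$, which is harmonic in $\Omega_1\cap\Omega_2=(\Omega_1\cup\Omega_2)\setminus(\Omega_1\vartriangle\Omega_2)$; the hypothesis $\|\phi_i\|_{H^2(\Omega_1\cap\Omega_2)}\le M$ then gives $\|w\|_{H^2(\Omega_1\cap\Omega_2)}\le 2M$, so Proposition \ref{estabilidad} is applicable to $w$ on $\Omega_1\cap\Omega_2$ with $\Gamma_0=\Gamma_{up}$, the smallness hypothesis being exactly the combined Cauchy data $\|\phi_i\|_{H^1(\Gamma_{up})}+\|\partial_n\phi_i\|_{L^2(\Gamma_{up})}\le\delta$.

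First I would produce a lower bound for the energy $E=\int_{\Omega_1\cap\Omega_2}|\nabla w|^2+\int_{\Omega_1\setminus\Omega_2}|\nabla\phi_1|^2+\int_{\Omega_2\setminus\Omega_1}|\nabla\phi_2|^2$. Since $E\ge\int_{\Omega_1\setminus\Omega_2}|\nabla\phi_1|^2+\int_{\Omega_2\setminus\Omega_1}|\nabla\phi_2|^2$, I apply to each difference region the box-decomposition argument of Theorem \ref{th2}: cover $(\Omega_i\setminus\Omega_{3-i})_h$ by a uniform mesh of cubes contained in $\Omega_i\setminus\Omega_{3-i}$, pick the cube $q_0$ of least gradient energy, and invoke the Lipschitz propagation of smallness (Proposition \ref{pro1}) for $\phi_i$ on $\Omega_i$ at the center of $q_0$. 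Combined with the fatness condition {\bf (H4)} for $\Omega_i\setminus\Omega_{3-i}$ and the identity $\int_{\Omega_i}|\nabla\phi_i|^2=\int_{\Gamma_{up}}\phi_i\partial_n\phi_i$, this yields for $i=1,2$
\begin{equation*}
\int_{\Omega_i\setminus\Omega_{3-i}}|\nabla\phi_i|^2\ge C\,|\Omega_i\setminus\Omega_{3-i}|\int_{\Gamma_{up}}\phi_i\partial_n\phi_i ,
\end{equation*}
with $C$ carrying the dependence on $|\Omega_i|,r_i,M_i$ and $\|\partial_n\phi_i\|_{L^2(\Gamma_{up})}/\|\partial_n\phi_i\|_{H^{-1/2}(\Gamma_{up})}$ announced in the statement.

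Next I would bound $E$ from above using (\ref{bi6}). On $\Gamma_d^1\subset\Gamma(b_2)$ one has $\partial_n\phi_2=0$, while $\Gamma_d^1$ is an interior surface of $\Omega_1$, so $\partial_n\phi_1=\partial_n w$ there and the last term of (\ref{bi6}) equals $\int_{\Gamma_d^1}\phi_2\,\partial_n w$. Since $\Gamma_d^1\cup\Gamma_d^2$ is the bottom of $\Omega_1\cap\Omega_2$ and $\partial_n w=0$ on its side walls, I integrate by parts over $\Omega_1\cap\Omega_2$ to trade this surface integral for $\int_{\Omega_1\cap\Omega_2}\nabla\phi_2\cdot\nabla w$ plus boundary pieces supported on $\Gamma_{up}$ (genuine Cauchy data of $w$) and on the companion surface $\Gamma_d^2$, which I expect to recombine with gradient energies already present in $E$. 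The volume term is estimated by Cauchy–Schwarz, $\big|\int_{\Omega_1\cap\Omega_2}\nabla\phi_2\cdot\nabla w\big|\le\|\phi_2\|_{H^1(\Omega_2)}\|w\|_{H^1(\Omega_1\cap\Omega_2)}$, and Proposition \ref{estabilidad} then gives
\begin{equation*}
\|w\|_{H^1(\Omega_1\cap\Omega_2)}\le C\,M\Big[\log\Big(\frac{M}{\|\phi_1-\phi_2\|_{H^1(\Gamma_{up})}+\|\partial_n(\phi_1-\phi_2)\|_{L^2(\Gamma_{up})}}\Big)\Big]^{-k},
\end{equation*}
which is precisely the quantity $A$. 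Hence $E\le\int_{\Gamma_{up}}[\partial_n\phi_1(\phi_1-\phi_2)+(\partial_n\phi_2-\partial_n\phi_1)\phi_2]+C\,A$. Writing $e_i:=\int_{\Gamma_{up}}\phi_i\partial_n\phi_i$ and $R$ for this right-hand side, the two bounds combine to
\begin{equation*}
C\big(|\Omega_1\setminus\Omega_2|\,e_1+|\Omega_2\setminus\Omega_1|\,e_2\big)\le R ;
\end{equation*}
both summands being nonnegative, each obeys $|\Omega_i\setminus\Omega_{3-i}|\le R/(Ce_i)$, and adding the two, together with $|\Omega_1\vartriangle\Omega_2|=|\Omega_1\setminus\Omega_2|+|\Omega_2\setminus\Omega_1|$, gives (\ref{binew}) after summing $1/e_i$ over $i$.

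The hard part will be the third step. Unlike Case I, the surface $\Gamma_d^1$ lies in the interior of $\Omega_1$ and the bottoms genuinely cross, so no boundary condition annihilates the cross-term $\int_{\Gamma_d^1}\phi_2\,\partial_n\phi_1$; controlling it forces the Cauchy problem for $w$, whose ill-posedness is exactly what manufactures the logarithmic factor in $A$. The delicate bookkeeping is to integrate by parts so that the smallness of $w$ is measured in $H^1(\Omega_1\cap\Omega_2)$ (the norm controlled by Proposition \ref{estabilidad}) rather than through $\partial_n w$ on $\Gamma_d^1$, which would lose a derivative, while verifying that the leftover contributions on $\Gamma_{up}$ and on $\Gamma_d^2$ are, with the correct orientations, either genuine data terms or gradient energies that are reabsorbed; the at-most-three-intersections normalization is what keeps this surface accounting finite and explicit.
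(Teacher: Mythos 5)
Your overall architecture coincides with the paper's: the same starting identity (\ref{bi6}), the same mesh/Lipschitz-propagation-of-smallness lower bound $\int_{\Omega_i\setminus\Omega_{3-i}}|\nabla\phi_i|^2\ge C|\Omega_i\setminus\Omega_{3-i}|\int_{\Gamma_{up}}\phi_i\partial_n\phi_i$ via Proposition \ref{pro1}, the same invocation of Proposition \ref{estabilidad} to manufacture $A$, and the same final assembly. The divergence --- and the gap --- is in your treatment of the cross term. The paper does not integrate by parts there: it applies Cauchy--Schwarz on $\Gamma_d^1$ together with the trace bound (\ref{bi15}), $\|\partial_n(\phi_1-\phi_2)\|_{L^2(\Gamma_d^1)}\le C\|\phi_1-\phi_2\|_{H^1(\Omega_1\cap\Omega_2)}$, and then feeds the $H^1$ factor into Proposition \ref{estabilidad}. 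Your worry that this trace estimate loses a derivative is legitimate, but your replacement fails for a different reason.

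Concretely, with $n$ the outward normal of $\Omega_1\cap\Omega_2$ and $w=\phi_1-\phi_2$, Green's identity gives
\begin{equation*}
\int_{\Gamma_d^1}\phi_2\,\partial_n w=\int_{\Omega_1\cap\Omega_2}\nabla\phi_2\cdot\nabla w-\int_{\Gamma_{up}}\phi_2\,\partial_n w-\int_{\Gamma_d^2}\phi_2\,\partial_n w ,
\end{equation*}
and on $\Gamma_d^2$ one has $\partial_n w=-\partial_n\phi_2$, so a further integration by parts over $\Omega_2\setminus\Omega_1$ (using $\partial_n\phi_2=0$ on $\Gamma_b^2$ and on the walls) yields $\int_{\Gamma_d^2}\phi_2\,\partial_n w=+\int_{\Omega_2\setminus\Omega_1}|\nabla\phi_2|^2$. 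Substituting into $E=[\text{data}]-2\int_{\Gamma_d^1}\phi_2\partial_n w$ therefore puts $+2\int_{\Omega_2\setminus\Omega_1}|\nabla\phi_2|^2$ on the right-hand side, which overcancels the single copy of that energy contained in $E$; what survives is
\begin{equation*}
\int_{\Omega_1\cap\Omega_2}|\nabla w|^2+\int_{\Omega_1\setminus\Omega_2}|\nabla\phi_1|^2-\int_{\Omega_2\setminus\Omega_1}|\nabla\phi_2|^2\le[\text{data}]+C\,A+\Big|2\int_{\Gamma_{up}}\phi_2\,\partial_n w\Big|.
\end{equation*}
So the $\Gamma_d^2$ piece does \emph{not} recombine harmlessly: it flips the sign of $\int_{\Omega_2\setminus\Omega_1}|\nabla\phi_2|^2$, and you lose exactly one of the two energies you must bound from above; the only way to restore it on the right is via $\int_{\Omega_2\setminus\Omega_1}|\nabla\phi_2|^2\le\int_{\Gamma_{up}}\phi_2\partial_n\phi_2$, a quantity that does not vanish when the two measurements coincide, so the resulting estimate for $|\Omega_1\vartriangle\Omega_2|$ is vacuous. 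Your ``hence $E\le[\text{data}]+CA$'' does not follow. The step can be repaired by symmetrizing --- run the same computation on the companion identity $E=[\text{data}']-2\int_{\Gamma_d^2}\phi_1\partial_n\phi_2$ noted after (\ref{bi6}), which loses $\int_{\Omega_1\setminus\Omega_2}|\nabla\phi_1|^2$ instead, and add the two resulting volume bounds, consistent with the sum over $i$ in (\ref{binew}) --- or by reverting to the paper's direct surface estimate (\ref{bi15}); but as written the proposal has a genuine sign error at its central step.
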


\begin{proof}

We observe that  from (\ref{bi6}) we have
\begin{eqnarray}
\label{bi12}
\int_{\Omega_1\setminus\Omega_2}|\nabla\phi_1|^2+\int_{\Omega_2\setminus\Omega_1}|\nabla\phi_2|^2 
\le \int_{\Gamma_{up}}\left[\partial_n\phi_1(\phi_1-\phi_2)+(\partial_n\phi_2-\partial_n\phi_1)\phi_2\right] \nonumber\\
\qquad+2\int_{\Gamma_d^1}\phi_2|\partial_n\phi_1-\partial_n\phi_2|.
\end{eqnarray}

Now, since $\phi_i\in H^2(\Omega_i)$ and the boundaries $\partial\Omega_i$ are of class $C^{1,1}$, from the trace Theorem, the last term in the right hand side of (\ref{bi12}) can be estimated as follows:
\begin{eqnarray}\label{bi15}
\int_{\Gamma_d^1}&\phi_2|\partial_n\phi_1-\partial_n\phi_2|\le \left(\int_{\Gamma_d^1}\phi_2^2\right)^{1/2}\left(\int_{\Gamma_d^1}(\partial_n\phi_1-\partial_n\phi_2)^2\right)^{1/2} \nonumber \\
&\le \|\phi_2\|_{H^1(\Omega_2)}\left(\int_{\left((\Omega_1\cup\Omega_2)\setminus(\Omega_1\vartriangle\Omega_2)\right)}|\nabla(\phi_1-\phi_2)|^2+|\phi_1-\phi_2|^2\right)^{1/2}.
\end{eqnarray}

From the above estimate and by Proposition \ref{estabilidad} applied to the second term in the right hand side of (\ref{bi15}), we get
\begin{align}\label{bi13}
\int_{\Gamma_d^1}\phi_2|\partial_n\phi_1-\partial_n\phi_2|\leq C  \|\phi_2\|_{H^1(\Omega_2)}\left[  \log\left(\frac{M}{\|\phi_1-\phi_2\|_{H^1(\Gamma_{up})} + \|\partial_n (\phi_1-\phi_2)\|_{L^2(\Gamma_{up})}}\right)\right]^{-k}.
\end{align}

Finally, since $\Omega_1\setminus\Omega_2, \Omega_2\setminus\Omega_1$ satisfy {\bf (H4)},  following the same arguments in (\ref{e12}), for $i=1,2$ we have
\begin{equation}
\label{bi14}
\int_{\Omega_i\setminus\Omega_{3-i}}|\nabla\phi_i|^2\ge C|\Omega_i\setminus\Omega_{3-i}|\int_{\Omega_i}|\nabla\phi_i|^2 
= C|\Omega_i\setminus\Omega_{3-i}|\int_{\Gamma_{up}}\phi_i\partial_n\phi_i.
\end{equation}

Then, from (\ref{bi12})--(\ref{bi14}), we obtain the desired result and the proof is finished.
\end{proof}

\begin{remark}
	Notice that estimates (\ref{bi11}) and (\ref{binew}), are different to (\ref{the1}) and (\ref{the2}) respectively. This is mostly due to the overlapping between the bottoms. That is, if we consider two bottoms $b_0,b_1$ such that $b_1(x)-b_0(x)$, $x\in S$, switch between positive and negative a finite number of times, then identities (\ref{bi4}) and (\ref{bi6}) possess more terms when integrating by parts.
\end{remark}

\section{Case III}

In this section, using the computations of the previous cases, we bound the volume of $D$ when the measurements of the Dirichlet and Neumann data are performed on an open subset of the free--surface.

Let us start with the Case I. Following the notation introduced in Section \ref{s_DN}, we consider $\phi_0$ and $\phi$ the weak solutions of problems:
\begin{equation}\label{partial-1}
\left\{
\begin{array}{rll}
\Delta\phi_0=0, &&\Omega, \\
\phi_0=\psi_0, &&\Gamma^{*}, \\
\partial_n\phi_0=0, &&\Gamma(b_0)\cup\Gamma_w(b_0,\zeta_0),
\end{array}
\right.
\end{equation}
\begin{equation}\label{partial-2}
\left\{
\begin{array}{rll}
\Delta\phi=0, &&\Omega\setminus D, \\
\phi=\psi, &&\Gamma^{*}, \\
\partial_n\phi=0, &&\Gamma(b_1)\cup\Gamma_w(b_1,\zeta_0),
\end{array}
\right.
\end{equation}
where $\Gamma^{*}$ is an open subset of $\Gamma_{up}$.

\begin{theorem}\label{th5}
Let $\phi_0\in H^2(\Omega)$ and $\phi\in H^2(\Omega\setminus D)$ be the solutions of problems (\ref{partial-1}) and (\ref{partial-2}), respectively. Assume that $\Omega$ satisfies {\bf (H1)} and the subdomain $D$ satisfies {\bf (H2)} and  {\bf (H4)}.
Then, for all $k\in(0,1)$ there exist $C_5,\delta_0 >0$ such that for every $\delta\in (0,\delta_0)$, and
\begin{eqnarray*}
 \|\phi_0\|_{H^2(\Omega)}\leq M, \quad \|\phi_0\|_{H^1(\Gamma_{up})} + \|\partial_n \phi_0\|_{L^2(\Gamma_{up})}\leq \delta, \\
 \|\phi\|_{H^2(\Omega\setminus D)}\leq M, \quad \|\phi\|_{H^1(\Gamma_{up})} + \|\partial_n \phi\|_{L^2(\Gamma_{up})}\leq \delta,
\end{eqnarray*}
where $M>0$, we have
\begin{align*}\label{the2.1}
|D|\leq C_5\left(\|\nabla\phi\|_{H^1(\Omega\setminus D)}+\|\phi_0\|_{H^1(\Omega)}\right) \log\left(\frac{M}{\|\psi-\psi_0\|_{H^1(\Gamma^{*})}+\|\partial_n(\phi-\phi_0)\|_{L^2(\Gamma^{*})}}\right)^{-k}.
\end{align*}
The constant $C_5$ depends only on $|\Omega|$, $r_0$, $M_0$, $M$, $\frac{\|\partial_n\phi_0\|_{L^2(\Gamma_{up})}}{\|\partial_n\phi_0\|_{H^{-1/2}(\Gamma_{up})}}$.

\end{theorem}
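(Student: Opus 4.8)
The plan is to follow the architecture of the proof of Theorem \ref{th2}, reducing the estimate for $|D|$ to a bound on $\int_D|\nabla\phi_0|^2$, and then to replace the direct boundary estimates — which are no longer available, since the Cauchy data are now known only on $\Gamma^{*}$ — by the logarithmic stability estimate of Proposition \ref{estabilidad}. First I would record the energy identity of Lemma \ref{lemma1}, in the form of (\ref{lem1-0}) re-derived for the solutions of (\ref{partial-1})--(\ref{partial-2}),
\begin{equation*}
\int_D|\nabla\phi_0|^2 \le \int_{\Gamma_{up}}\partial_n\phi(\psi-\psi_0)+\int_{\Gamma_{up}}(\partial_n\phi_0-\partial_n\phi)\psi_0.
\end{equation*}
Exactly as in (\ref{e12})--(\ref{e12.1}), the fatness condition (\ref{fatness}) together with the Lipschitz propagation of smallness in Proposition \ref{pro1} yields the lower bound $\int_D|\nabla\phi_0|^2 \ge C\,|D|\int_\Omega|\nabla\phi_0|^2$, so that $|D|$ is controlled by the right-hand side above divided by the energy $\int_\Omega|\nabla\phi_0|^2$; this is where the dependence of $C_5$ on the ratio $\|\partial_n\phi_0\|_{L^2(\Gamma_{up})}/\|\partial_n\phi_0\|_{H^{-1/2}(\Gamma_{up})}$ enters, through the constant $C_\rho$.

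Next I would estimate the two boundary integrals by the Cauchy--Schwarz inequality, separating in each product a \emph{bounded} factor from a \emph{small} factor, namely $|\int_{\Gamma_{up}}\partial_n\phi(\psi-\psi_0)| \le \|\partial_n\phi\|_{L^2(\Gamma_{up})}\,\|\psi-\psi_0\|_{L^2(\Gamma_{up})}$ and $|\int_{\Gamma_{up}}(\partial_n\phi_0-\partial_n\phi)\psi_0| \le \|\partial_n(\phi-\phi_0)\|_{L^2(\Gamma_{up})}\,\|\psi_0\|_{L^2(\Gamma_{up})}$. The bounded factors $\|\partial_n\phi\|_{L^2(\Gamma_{up})}$ and $\|\psi_0\|_{L^2(\Gamma_{up})}$ are controlled by the trace theorem through $\|\nabla\phi\|_{H^1(\Omega\setminus D)}$ and $\|\phi_0\|_{H^1(\Omega)}$, which accounts for the prefactor $\|\nabla\phi\|_{H^1(\Omega\setminus D)}+\|\phi_0\|_{H^1(\Omega)}$ in the statement.

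The crux is to show that the small factors $\|\psi-\psi_0\|_{L^2(\Gamma_{up})}=\|\phi-\phi_0\|_{L^2(\Gamma_{up})}$ and $\|\partial_n(\phi-\phi_0)\|_{L^2(\Gamma_{up})}$ — which live on the \emph{whole} free surface — are dominated by the logarithmic quantity built only from the Cauchy data on $\Gamma^{*}$. Setting $w=\phi-\phi_0$, which is harmonic on $\Omega\setminus D$ with $\|w\|_{H^2(\Omega\setminus D)}\le 2M$, I would apply Proposition \ref{estabilidad} with $\Gamma_0=\Gamma^{*}$ (as in the proof of Theorem \ref{th4}) to obtain
\begin{equation*}
\|w\|_{H^1(\Omega\setminus D)} \le C M\left[\log\left(\frac{M}{\|\psi-\psi_0\|_{H^1(\Gamma^{*})}+\|\partial_n(\phi-\phi_0)\|_{L^2(\Gamma^{*})}}\right)\right]^{-k},
\end{equation*}
and then transfer this interior smallness to the trace on $\Gamma_{up}$ by the trace inequality, giving control of $\|\phi-\phi_0\|_{L^2(\Gamma_{up})}$ by the same logarithmic factor.

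The main obstacle is the Neumann trace $\|\partial_n(\phi-\phi_0)\|_{L^2(\Gamma_{up})}$: Proposition \ref{estabilidad} furnishes only $H^1$ interior control of $w$, whereas the normal derivative requires one more half-derivative at the boundary. I would overcome this by interpolating between the a priori bound $\|w\|_{H^2(\Omega\setminus D)}\le 2M$ and the logarithmic $H^1$ bound just obtained: since $\|w\|_{H^{3/2}}\lesssim \|w\|_{H^1}^{1/2}\|w\|_{H^2}^{1/2}$ and the trace of $\nabla w$ is controlled by $\|w\|_{H^{3/2}(\Omega\setminus D)}$, one gains a factor $(\log(\cdots))^{-k/2}$, which — because $k\in(0,1)$ is arbitrary — can be reabsorbed into the stated exponent by relabeling $k$. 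Collecting the bounded prefactors, the logarithmic small factors, and the lower bound on $\int_D|\nabla\phi_0|^2$ from Proposition \ref{pro1} then yields the asserted inequality, the energy $\int_\Omega|\nabla\phi_0|^2$ being absorbed into the constant $C_5$ together with its dependence on $M$, $|\Omega|$, $r_0$ and $M_0$.
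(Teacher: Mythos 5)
Your proposal follows essentially the same route as the paper: reduce to the upper bound of Theorem \ref{th2} (whether by citing it or by re-running the energy identity (\ref{lem1-0}), the fatness condition (\ref{fatness}) and the Lipschitz propagation of smallness of Proposition \ref{pro1}), split the two boundary integrals by Cauchy--Schwarz into a bounded trace factor times a small factor, and then invoke the logarithmic stability estimate of Proposition \ref{estabilidad} on $w=\phi-\phi_0$ with $\Gamma_0=\Gamma^{*}$ to dominate the small factors on the whole of $\Gamma_{up}$ by the Cauchy data on $\Gamma^{*}$ alone.

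Where you go beyond the paper is in the last step, and usefully so: the paper simply writes ``applying Proposition \ref{estabilidad}, we obtain the desired result,'' without addressing the fact that the proposition only yields an $H^1(\Omega\setminus D)$ bound on $w$, which controls the Dirichlet trace $\|\phi-\phi_0\|_{L^2(\Gamma_{up})}$ but not the Neumann trace $\|\partial_n(\phi-\phi_0)\|_{L^2(\Gamma_{up})}$. Your interpolation between the a priori $H^2$ bound and the logarithmic $H^1$ bound is the right way to close this, with one small caveat: $H^{3/2}(\Omega\setminus D)$ is the borderline case for an $L^2(\partial\Omega)$ trace of the gradient, so you should interpolate to $H^{3/2+\varepsilon}$ (exponent $\tfrac12-\varepsilon$ on the $H^1$ norm), which still produces a factor $\bigl[\log(\cdots)\bigr]^{-k(1/2-\varepsilon)}$ that can be reabsorbed by relabeling $k\in(0,1)$; alternatively, harmonicity of $w$ up to $\Gamma_{up}$ can be used to upgrade the boundary regularity. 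With that adjustment your argument is complete and in fact more rigorous than the one printed in the paper.
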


\begin{proof}
Observe that, from Theorem \ref{th2}, the following estimate holds:
\begin{equation*}
|D|\leq C_2 \frac{\int_{\Gamma_{up}}\partial_n\phi(\phi-\phi_0)+\int_{\Gamma_{up}}(\partial_n\phi_0-\partial_n\phi)\phi_0}{\int_{\Gamma_{up}}\phi_0\partial_n\phi_0}.
\end{equation*}

From (\ref{i8}), we have that 
\begin{equation*}
0<\int_{\Gamma_{up}}\phi_0\partial_n\phi_0.
\end{equation*}

Therefore, we get
\begin{equation*}
|D|\leq \widetilde{C}_2\left(\int_{\Gamma_{up}}\partial_n\phi(\phi-\phi_0)+\int_{\Gamma_{up}}(\partial_n\phi_0-\partial_n\phi)\phi_0\right).
\end{equation*}

Using H\"older's inequality and trace theorem, we obtain that
\begin{align*}
|D|\leq \widetilde{C}_2\left[\|\nabla\phi\|_{H^1(\Omega\setminus D)}\left(\int_{\Gamma_{up}}|\phi-\phi_0|^2\right)^{1/2}+\|\phi_0\|_{H^1(\Omega)}\left(\int_{\Gamma_{up}}|\partial_n\phi_0-\partial_n\phi|^2\right)^{1/2} \right].
\end{align*}
Finally, applying Proposition \ref{estabilidad}, we obtain the desired result and the proof ends.
\end{proof}

For the Case II, we consider $\phi_1$ and $\phi_2$ be the solutions of problem (\ref{bi1}) with Dirichlet boundary data in $\Gamma^{*}$. That is,
\begin{equation*}
\label{partial-3}
\left\{
\begin{array}{rll}
\Delta\phi_1=0, &&\Omega_1, \\
\phi_1=\psi_1, &&\Gamma^{*}, \\
\partial_n\phi_1=0, &&\Gamma_b\cup\Gamma_w(b_1,\zeta_0),
\end{array}
\right.
\quad
\left\{
\begin{array}{rll}
\Delta\phi_2=0, &&\Omega_2, \\
\phi_2=\psi_2, &&\Gamma^{*}, \\
\partial_n\phi_2=0, &&\Gamma_d\cup \Gamma_w(b_2,\zeta_0).
\end{array}
\right.
\end{equation*}

Following the same arguments that in the previous theorem, we obtain the next result about the size estimate of $D$, when the measurements are performed on an open subset of $\Gamma_{up}$.

\begin{theorem}\label{th6}
Assume that $\Omega_1,\Omega_2$ satisfy {\bf (H1)} with constants $L_1,Q_1$ and $L_2,Q_2$, respectively. Assume also that $\Omega_1\vartriangle\Omega_2$ satisfies {\bf (H2)}, and $\Omega_1\setminus\Omega_2$, $\Omega_2\setminus\Omega_1$ satisfy {\bf (H4)}, with constants $r_1,M_1$ and $r_2,M_2$, respectively. Then, for all $k\in(0,1)$, and $i=1,2$, there exist $C_6,\delta_0 >0$ such that for every $\delta\in (0,\delta_0)$ and
\begin{equation*}
 \|\phi_i\|_{H^2((\Omega_1\cup\Omega_2)\setminus(\Omega_1\vartriangle\Omega_2))}\leq M, \quad \|\phi_i\|_{H^1(\Gamma_{up})} + \|\partial_n \phi_i\|_{L^2(\Gamma_{up})}\leq \delta, 
\end{equation*}
where $M$ is a positive constant, we have
\begin{align*}
|\Omega_1\vartriangle\Omega_2| \le C_6\left( \|\phi_2\|_{H^1(\Omega_2)}+\|\nabla\phi_1\|_{H^1(\Omega_1)}\right) 
\left[\log\left(\frac{M}{\|\psi_1-\psi_2\|_{H^1(\Gamma^{*})} + \|\partial_n (\phi_1-\phi_2)\|_{L^2(\Gamma^{*})}}\right)\right]^{-k},
\end{align*}
and  the constant $C_6$ depends only on $|\Omega_i|$, $r_i$, $M_i$, $M$, $\frac{\|\partial_n\phi_i\|_{L^2(\Gamma_{up})}}{\|\partial_n\phi_i\|_{H^{-1/2}(\Gamma_{up})}}$.

\end{theorem}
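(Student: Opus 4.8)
The plan is to reproduce, for the symmetric--difference setting, the same scheme used in the proof of Theorem \ref{th5}, now departing \emph{not} from the final estimate (\ref{binew}) but from the intermediate inequalities (\ref{bi12}) and (\ref{bi14}) established in the proof of Theorem \ref{th4}, and replacing the role of the full--surface data by the Cauchy stability estimate of Proposition \ref{estabilidad} applied on the open patch $\Gamma^{*}$. First I would combine the energy inequality (\ref{bi12}) with the lower bound (\ref{bi14}) to obtain, for $i=1,2$,
\begin{equation*}
C|\Omega_i\setminus\Omega_{3-i}|\int_{\Gamma_{up}}\phi_i\partial_n\phi_i \le \int_{\Gamma_{up}}\left[\partial_n\phi_1(\phi_1-\phi_2)+(\partial_n\phi_2-\partial_n\phi_1)\phi_2\right]+2\int_{\Gamma_d^1}\phi_2|\partial_n\phi_1-\partial_n\phi_2|.
\end{equation*}
Since $\phi_i$ is the unique nontrivial weak solution, the denominator satisfies $\int_{\Gamma_{up}}\phi_i\partial_n\phi_i=\int_{\Omega_i}|\nabla\phi_i|^2>0$; dividing and summing over $i$, together with $|\Omega_1\vartriangle\Omega_2|=|\Omega_1\setminus\Omega_2|+|\Omega_2\setminus\Omega_1|$, reduces the theorem to estimating the two numerators on the right in terms of quantities controlled on $\Gamma^{*}$.

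Next, the essential difference with Theorem \ref{th4} is that the first boundary integral, which there was admissible data on the whole of $\Gamma_{up}$, must now be re--expressed through the differences $\phi_1-\phi_2$ and $\partial_n(\phi_1-\phi_2)$. To this end I would apply H\"older's inequality on $\Gamma_{up}$,
\begin{equation*}
\int_{\Gamma_{up}}\left[\partial_n\phi_1(\phi_1-\phi_2)+(\partial_n\phi_2-\partial_n\phi_1)\phi_2\right]\le \|\partial_n\phi_1\|_{L^2(\Gamma_{up})}\|\phi_1-\phi_2\|_{L^2(\Gamma_{up})}+\|\phi_2\|_{L^2(\Gamma_{up})}\|\partial_n(\phi_1-\phi_2)\|_{L^2(\Gamma_{up})},
\end{equation*}
and bound the correction term $\int_{\Gamma_d^1}\phi_2|\partial_n\phi_1-\partial_n\phi_2|$ exactly as in (\ref{bi15}) by $\|\phi_2\|_{H^1(\Omega_2)}$ times the $H^1$--norm of $\phi_1-\phi_2$ on the common domain $\Omega_1\cap\Omega_2=(\Omega_1\cup\Omega_2)\setminus(\Omega_1\vartriangle\Omega_2)$. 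By the trace theorem one has $\|\partial_n\phi_1\|_{L^2(\Gamma_{up})}\le C\|\nabla\phi_1\|_{H^1(\Omega_1)}$, $\|\phi_2\|_{L^2(\Gamma_{up})}\le C\|\phi_2\|_{H^1(\Omega_2)}$, while the traces of $\phi_1-\phi_2$ and $\partial_n(\phi_1-\phi_2)$ on $\Gamma_{up}$ are all controlled by $\|\phi_1-\phi_2\|_{H^1(\Omega_1\cap\Omega_2)}$. Collecting these bounds, the entire numerator is dominated by $C\bigl(\|\phi_2\|_{H^1(\Omega_2)}+\|\nabla\phi_1\|_{H^1(\Omega_1)}\bigr)\,\|\phi_1-\phi_2\|_{H^1(\Omega_1\cap\Omega_2)}$, which already displays the prefactor appearing in the statement.

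Finally, I would invoke Proposition \ref{estabilidad} with $u=\phi_1-\phi_2$, harmonic on $\Omega_1\cap\Omega_2$ with $\|u\|_{H^2}\le M$, and $\Gamma_0=\Gamma^{*}$, to bound $\|\phi_1-\phi_2\|_{H^1(\Omega_1\cap\Omega_2)}$ by $CM\bigl[\log\bigl(M/(\|\psi_1-\psi_2\|_{H^1(\Gamma^{*})}+\|\partial_n(\phi_1-\phi_2)\|_{L^2(\Gamma^{*})})\bigr)\bigr]^{-k}$; absorbing $M$ and the denominators' positive lower bounds into $C_6$ yields the claimed estimate. I expect the main obstacle to be the same one already latent in Theorem \ref{th4}: verifying that $\phi_1-\phi_2$ genuinely solves a harmonic Cauchy problem on a domain with $C^{1,1}$ boundary containing the patch $\Gamma^{*}$, with a \emph{uniform} $H^2$ bound, so that Proposition \ref{estabilidad} is applicable. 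This requires the reflection/extension construction of Section \ref{s_DN} to turn $\Omega_1\vartriangle\Omega_2$ into an interior set and, in particular, to control the normal trace $\|\partial_n(\phi_1-\phi_2)\|_{L^2(\Gamma_{up})}$ by the interior $H^1$--energy rather than by the full $H^2$--norm.
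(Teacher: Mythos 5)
Your proposal follows essentially the same route the paper intends: the paper gives no explicit proof of Theorem \ref{th6}, stating only that it follows ``the same arguments as in the previous theorem,'' and your reconstruction --- combining the Case II energy inequality (\ref{bi12}) with the lower bound (\ref{bi14}), estimating the numerator by H\"older and the trace theorem, and closing with Proposition \ref{estabilidad} applied on $\Gamma^{*}$ --- is exactly that argument transplanted from Theorem \ref{th5}. The obstacle you flag at the end (controlling $\|\partial_n(\phi_1-\phi_2)\|_{L^2(\Gamma_{up})}$ by the interior $H^1$ energy rather than the $H^2$ norm) is real but is equally latent in the paper's own proof of Theorem \ref{th5}, so your proposal is faithful to the paper's approach, including its level of rigor at that step.
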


%

\section{Further comments and future work}

We have developed a method to estimate the size of a cavity along the rigid boundary through measurements on the free surface on a potential and perfect fluid. We have used the context of the water--waves theory to explain some particular issues arising in our approach. That is, the outcome of a rigid, impermeable boundary, together with a free surface where measurements are performed. We have generalized the works in \cite{alessandrini2000optimal,alessandrini2002detecting,beretta2017size}, by considering the Neumann and Dirichlet measurements simultaneously. Moreover if we allow changes of sign for the bottom difference, constants in the size estimate are different.

Concerning the general water--waves system and the present framework of the paper we have the following comments.

First, well-posedness of the general water--waves system in $\R^N$, on bounded domains, is an open question. Among others, because of the physical phenomena arising in the contact line between the free surface and the rigid solid walls \cite{graham1983new,kim2015capillary}.

Second, the water--waves system and asymptotic related systems are studied, classically, on unbounded domains (a strip). In \cite{fontelos2017bottom}, the authors proved the identifiability inverse problem of bottom detection by free surface measurements in that context. It would be interesting to state the results of this paper, as well as those in \cite{alessandrini2000optimal,alessandrini2002detecting,beretta2017size}, in the unbounded domain case, but some difficulties arise. The Lipschitz propagation of smallness and the stability estimates for ill-posed Cauchy problems are unknown in the strip-domain context.

Third, an interesting problem will be the study of the size estimate as in Case I, but from measurements on different free surfaces; that is, $\phi_0=\psi_0$ on $\Gamma(\zeta_0)$ and $\phi=\psi$ on $\Gamma(\zeta)$.

\section*{Acknowledgments:}
The authors thank M. Choulli for suggesting references \cite{choulli2019new, choulli2019global} and value comments on the stability of ill-posed Cauchy problems.

\bibliographystyle{abbrv}
\bibliography{bib}

\end{document}